\numberwithin{equation}{section} 
\newcommand{\m}[1]{\mathcal{#1}}
\newcommand{\bb}[1]{\mathbb{#1}}
\newcommand{\mrm}[1]{\mathrm{#1}}
\newcommand{\f}[1]{\mathfrak{#1}}
\renewcommand{\Re}{\mathrm{Re}}
\renewcommand{\Im}{\mathrm{Im}}
\newcommand{\del}{\partial}
\newcommand{\delbar}{\bar{\partial}} 
\newcommand{\I}{\mathrm{i}\mkern1mu}	
\DeclarePairedDelimiter\abs{\lvert}{\rvert}	
\DeclarePairedDelimiter\norm{\lVert}{\rVert}	
\DeclareMathOperator{\arccot}{arccot} 
\DeclarePairedDelimiter{\set}{\{}{\}}	
\newcommand{\tc}{\mathrel{}\mathclose{}\middle|\mathopen{}\mathrel{}}	
\newtheorem{thm}{Theorem}[section]
\newtheorem{prop}[thm]{Proposition}
\newtheorem{lemma}[thm]{Lemma}
\newtheorem{cor}[thm]{Corollary}
\newtheorem{conj}[thm]{Conjecture}
\theoremstyle{definition}
\newtheorem{definition}[thm]{Definition}
\newtheorem{rmk}[thm]{Remark} 
\theoremstyle{remark}
\title{A K-energy functional for complexified K\"ahler classes}
\author{Carlo Scarpa}
\date{\vspace{-1em}}
\begin{document}

\maketitle

\begin{abstract}
\noindent Given a complexified K\"ahler class on a compact K\"ahler manifold, we introduce a generalisation of the K-energy functional, defined on the space of complexified K\"ahler potentials, whose critical points are solutions of the scalar curvature equation with B-field introduced by Schlitzer and Stoppa. We prove that this extended K-energy is convex along geodesics in the space of almost calibrated potentials. As an application, we show that, at least in some notable cases, solutions of the scalar curvature equation with B-field are unique in their class, confirming the expectation that the scalar curvature equation with B-field identifies canonical representatives of complexified K\"ahler classes.
\end{abstract}


\section{Introduction and main results}

Let~$X$ be a compact complex manifold of complex dimension~$n$, and fix two cohomology classes~$\alpha,\beta\in H^{1,1}(X)\cap H^2(X,\bb{R})$. We assume that $\alpha$ is a K\"ahler class, while~$\beta$ can be any cohomology class. We will refer to $\beta$ as the \emph{B-field class}, and elements of~$\beta$ will be called \emph{B-fields}; the terminology comes from mirror symmetry, where one often considers classes of the form~$\alpha^{\bb{C}}\coloneqq\beta+\I\alpha\in H^{1,1}(X)$, called \emph{complexified K\"ahler classes}. We will assume throughout that the \emph{complexified volume}~$(\alpha^{\bb{C}})^n$ does not vanish.

We are interested in the problem of choosing \emph{canonical} representatives of the complexified K\"ahler class: this question arises naturally in mirror symmetry, usually for~$X$ Calabi-Yau or Fano, on which complexified K\"ahler forms~$\omega^{\bb{C}}\coloneqq B+\I\omega$ are used to define the Fukaya category~$\m{F}(X,\omega^{\bb{C}})$. Different choices of~$\omega^{\bb{C}}$ in~$\alpha^{\bb{C}}$ give rise to~$A_\infty$-equivalent Fukaya categories, but to study geometric properties of~$\m{F}(X,\omega^{\bb{C}})$ and its objects it is necessary to fix a representative of~$\alpha^{\bb{C}}$.

For~$\beta=0$, this reduces to the famous question of how to find a special representative of a given K\"ahler class on~$X$, a version of the \emph{Calabi problem}. This is done by imposing some curvature conditions on the K\"ahler metric, such as being K\"ahler-Einstein, having constant scalar curvature, or being an extremal point for the Calabi functional. These conditions pick out at most one K\"ahler metric in each class, up to biholomorphisms of~$X$. This fundamental result has been established, in various degrees of generality, in~\cite{BandoMabuchi, Donaldson_scalarcurvature_embeddingsI, Mabuchi_uniqueness, ChenTian_uniqueness, BermanBerndtsson_uniquness}.

To fix a choice of representative for~$\alpha^{\bb{C}}$ then it is natural to impose a similar curvature condition on a complexified K\"ahler form in~$\alpha^{\bb{C}}$. Schlitzer and Stoppa proposed, in~\cite{SchlitzerStoppa}, a generalisation of the constant scalar curvature equation for complexified K\"ahler forms. They consider, for a K\"ahler form~$\omega\in\alpha$ and a B-field~$B\in\beta$, the \emph{scalar curvature equation with B-field}
\begin{equation}\label{eq:scalar_dHYM_singola}
	s(\omega)=c+\gamma\frac{(B+\I\omega)^n}{\omega^n},
\end{equation}
where~$s(\omega)$ denotes the scalar curvature of the metric defined by~$\omega$, and~$c\in\bb{R}$,~$\gamma\in\bb{C}$ are constants. This condition is obtained by coupling the constant scalar curvature equation for K\"ahler metrics with the \emph{deformed Hermitian Yang-Mills equation}, through the formalism of infinite-dimensional moment maps. They also show that, in a regime known as the \emph{large volume limit},~\eqref{eq:scalar_dHYM_singola} recovers the system
\begin{equation}\label{eq:largevolume_system}
\begin{cases}
	\Delta B=0\\
	s(\omega)=c,
\end{cases}
\end{equation}
which if often considered in the mirror symmetry literature to fix representatives of~$\alpha$ and~$\beta$ in the large volume limit, particularly for $X$ Calabi-Yau where the second equation in \eqref{eq:largevolume_system} reduces to Ricci-flatness of $\omega$.

One of the reasons to consider~\eqref{eq:scalar_dHYM_singola}, rather than~\eqref{eq:largevolume_system}, is that the existence of solutions of~\eqref{eq:scalar_dHYM_singola} is expected to be equivalent to some algebro-geometric stability condition that should depend on the B-field class, while the existence of solutions of~\eqref{eq:largevolume_system} is clearly independent from $\beta$. We refer to~\cite{ScarpaStoppa_toric_dHYM} and~\cite{Stoppa_KstabMirror} for more precise considerations regarding algebraic stability notions and possible connections with special properties of the mirror manifold of~$(X,\alpha^{\bb{C}})$.
  
The main goal of the present work is to describe a variational framework in which to study~\eqref{eq:scalar_dHYM_singola}. We will focus in particular on the geometry of the space of solutions of~\eqref{eq:scalar_dHYM_singola}, giving additional evidence for the following expectation of~\cite{ScarpaStoppa_toric_dHYM}: \emph{solutions of~\eqref{eq:scalar_dHYM_singola}, if they exist, are unique in~$\alpha\times\beta$, up to pullback by an automorphism of~$X$}. In other words, the scalar curvature equation with~$B$-field gives a way to choose a canonical representative for the complexified K\"ahler class~$\alpha^{\bb{C}}$. The heart of the paper is Section~\ref{sec:complexified_Kenergy}, which is dedicated to the proof of the following
\begin{thm}\label{thm:complexified_Kenergy}
	There is a functional~$\tilde{\m{M}}:\alpha^{\bb{C}}\to\mathbb{R}$ whose Euler-Lagrange equation is~\eqref{eq:scalar_dHYM_singola}. Moreover,~$\tilde{\m{M}}$ is convex along geodesics in the space of \emph{almost calibrated representatives} of~$\alpha^{\bb{C}}$.
\end{thm}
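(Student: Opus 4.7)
I would construct $\tilde{\m{M}} = \m{M} + \m{D}$ as the sum of the classical Mabuchi K-energy $\m{M}(\omega)$ on the K\"ahler class $\alpha$ and a new functional $\m{D}(\omega,B)$ whose variational derivative supplies the $B$-field term in \eqref{eq:scalar_dHYM_singola}. Parametrising $\alpha\times\beta$ by real potentials $(\varphi,\eta)$, with $\omega=\omega_0+dd^c\varphi$ and $B=B_0+dd^c\eta$, and setting $\psi:=\eta+\I\varphi$, the natural candidate for the differential of $\m{D}$ is the one-form
\begin{equation*}
\sigma(\dot\varphi,\dot\eta) \;=\; -\Im\int_X \dot\psi\,\gamma\,(B+\I\omega)^n,
\end{equation*}
and $\m{D}$ would be obtained by integrating $\sigma$ along any path from a reference representative. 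A direct computation using $\delta(B+\I\omega)^n = n\,dd^c\dot\psi\wedge(B+\I\omega)^{n-1}$ and integration by parts on $X$ shows that $\sigma$ is closed, so $\m{D}$ is path-independent and hence well defined on $\alpha\times\beta$.

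\textbf{Euler--Lagrange equation.} Separating $\sigma$ into its $\dot\varphi$- and $\dot\eta$-parts, the first variation of $\m{D}$ gives, up to normalising constants, $-\Re\bigl(\gamma(B+\I\omega)^n/\omega^n\bigr)$ paired with $\dot\varphi\,\omega^n$ and $-\Im\bigl(\gamma(B+\I\omega)^n/\omega^n\bigr)$ paired with $\dot\eta\,\omega^n$. Combined with the standard formula $\delta\m{M}(\dot\varphi)=-\int_X\dot\varphi\,(s(\omega)-c)\,\omega^n/n!$, the critical points of $\tilde{\m{M}}$ are precisely those $(\omega,B)\in\alpha\times\beta$ satisfying, simultaneously, the real and imaginary parts of the complex equation \eqref{eq:scalar_dHYM_singola}.

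\textbf{Convexity.} For the second assertion I would restrict to the open subset $\m{A}\subset\alpha\times\beta$ of almost calibrated representatives, namely those pairs for which the argument $\theta$ of $(B+\I\omega)^n/\omega^n$ takes values in a fixed interval of length less than $\pi$. Following the Collins--Yau picture for dHYM geodesics, $\m{A}$ carries a natural Riemannian structure whose geodesics are governed by a degenerate, complex Monge--Amp\`ere-type equation for a complex potential $\Psi$ on $X\times A$, with $A$ an annulus. Differentiating $\tilde{\m{M}}$ twice along such a geodesic, the contribution $\partial_t^2\m{M}$ is non-negative by the Berman--Berndtsson/Chen--Tian argument, while the $\ddot\psi$ terms in $\partial_t^2\m{D}$ are absorbed by the geodesic equation, leaving an integrand that the almost calibrated hypothesis forces to have a definite sign, analogously to how positivity of $\Im\bigl(e^{-\I\theta_0}(B+\I\omega)^n\bigr)$ underpins Collins--Yau's convexity of the Jacobi functional.

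\textbf{Main obstacle.} The hardest step is the low regularity of geodesics in $\m{A}$: they solve a strongly degenerate PDE and are expected to be only $C^{1,1}$ in favourable cases, so the convexity computation cannot be performed naively along smooth paths. I would handle this through approximation by $\varepsilon$-geodesics, as in Chen's treatment of the Mabuchi metric, taking care that the elliptic regularisation respects the almost-calibrated condition. A secondary difficulty is that the coupled geodesic produces cross terms between $\varphi$ and $\eta$ whose sign is not a priori controlled; the almost-calibration hypothesis is precisely what forces these cross terms to combine with the diagonal contributions into a manifestly non-negative expression, mirroring the role this condition plays in the purely dHYM setting.
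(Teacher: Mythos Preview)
Your construction of $\tilde{\m{M}}$ and the verification that its critical points solve~\eqref{eq:scalar_dHYM_singola} match the paper's approach: the paper also defines $\tilde{\m{M}}$ via the closed $1$-form $\sigma$, splits off the classical K-energy, and identifies the remainder with a complexified Calabi--Yau functional \`a la Collins--Yau.

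The convexity argument, however, rests on two misconceptions. First, you assume that the space of almost calibrated representatives carries a natural Riemannian structure whose Levi-Civita geodesics are the relevant ones. The paper explicitly observes (Section~\ref{sec:geod_equations}) that this is \emph{not} the case: the coupled system~\eqref{eq:geodesic_coupled} comes from a torsion-free, non-metric connection, and consists of the K\"ahler geodesic equation for $\Im(\varphi)$ together with a separate dHYM-type equation $\Re[\mrm{e}^{-\I\hat\vartheta}(\ddot\varphi\,(\omega^{\bb{C}}_\varphi)^n-n\,\I\del\dot\varphi\wedge\delbar\dot\varphi\wedge(\omega^{\bb{C}}_\varphi)^{n-1})]=0$ mixing both parts. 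Without this specific form, your plan to ``absorb the $\ddot\psi$ terms'' cannot be executed: the two equations are tailored precisely so that each kills the corresponding indefinite term in $\partial_t^2\tilde{\m{M}}$.

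Second, you identify low regularity of geodesics as the main obstacle and propose $\varepsilon$-approximation. But Theorem~\ref{thm:complexified_Kenergy} as stated and proved is a \emph{formal} convexity result along smooth solutions of~\eqref{eq:geodesic_coupled}; existence and regularity of such geodesics between given endpoints is left open as Conjecture~\ref{conj:geodesics_existence}. The actual work is a pointwise computation: in a coframe diagonalising $B$ relative to $\omega$ with eigenvalues $\lambda_a$, and writing $\eta:=\Theta(\omega^{\bb{C}})-\hat\vartheta$, the second variation along a smooth geodesic reduces to $\norm{\m{D}\,\Im(\dot\varphi)}^2_{L^2}$ plus the integral of
\[
\frac{\abs{\gamma}\,r(\omega^{\bb{C}})}{\cos\eta}\sum_a\frac{\abs{(\del\dot u)_a-\lambda_a(\del\dot v)_a}^2}{1+\lambda_a^2}\,\omega^n.
\]
The almost-calibrated condition is exactly $\cos\eta>0$, which fixes the sign. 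So the ``cross terms'' you flag as a secondary difficulty are the entire content of the proof, and they are handled not by a soft positivity argument but by this explicit algebraic reduction to a sum of squares.
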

We refer to Section~\ref{sec:complexified_Kenergy} for the precise definition of the space of almost calibrated representatives of a complexified K\"ahler class, which is inspired by a similar definition in~\cite{Collins_stabilitydHYM}. Briefly, these are complexified K\"ahler forms~$\omega^{\bb{C}}\in\alpha^{\bb{C}}$ for which
\begin{equation}\label{eq:almost_cal}
	\Re(\gamma(\omega^{\bb{C}})^n)>0.
\end{equation}
The almost calibrated condition \eqref{eq:almost_cal} is crucial for most of the arguments in this paper.

\smallskip

The functional $\tilde{\m{M}}$ is an analogue of the \emph{K-energy functional} used in the study of constant scalar curvature K\"ahler (cscK, from now on) metrics, and we call it the \emph{complexified K-energy} of~$(X,\alpha^{\bb{C}})$. The existence of such a functional is expected from the moment map description of~\eqref{eq:scalar_dHYM_singola}, and it can be interpreted as an infinite-dimensional analogue of the Kempf-Ness functional. 

It is natural to try to apply the techniques used in the study of the K-energy functional to the complexified K-energy. For example, it is expected that the existence of critical points of~$\tilde{\m{M}}$, i.e.\ solutions of~\eqref{eq:scalar_dHYM_singola}, should be equivalent to the coerciveness of~$\tilde{\m{M}}$ on the space of almost calibrated representatives. To prove this however one would need a much better understanding of the geometry of this space; in general, it is not even clear when it is non-empty (which we assume in this paper), or connected.

A second issue is that a regularity theory for the geodesics appearing in Theorem~\ref{thm:complexified_Kenergy} is not yet available, and they do not encode a Riemannian structure on the space of almost calibrated potentials. We will discuss these issues in Section~\ref{sec:geod_equations}; we expect that it will be possible to combine the known regularity results for geodesics of K\"ahler potentials~\cite{Chen2000, TosattiWeinkove_C11regularity}, and geodesics of calibrated potentials~\cite{CollinsYau_dHYMgeodesics, CollinsChuLee_calibrated11forms} with respect to a fixed K\"ahler form, to establish the uniqueness of solutions of~\eqref{eq:scalar_dHYM_singola}, following the strategy of~\cite{BermanBerndtsson_uniquness}. To exemplify this, we prove a uniqueness result in Section~\ref{sec:uniqueness}.
\begin{thm}\label{thm:uniqueness}
	Assume that~$X$ is a complex surface and that~$\beta\sin\hat{\vartheta}>\alpha\cos\hat{\vartheta}$. Then, the almost calibrated solutions of~\eqref{eq:scalar_dHYM_singola} are unique, up to the action of the group of reduced automorphisms of~$X$. The same holds if~$(X,\alpha)$ is toric (of arbitrary dimension),~$\alpha^{\bb{C}}$ has supercritical phase, and one considers torus-invariant solutions of the equation.
\end{thm}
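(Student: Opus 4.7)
The plan is to adapt the Berman-Berndtsson strategy of~\cite{BermanBerndtsson_uniquness} to the complexified setting, using the convex functional~$\tilde{\m{M}}$ from Theorem~\ref{thm:complexified_Kenergy} in place of the classical K-energy. Given two solutions~$\omega^{\bb{C}}_0=B_0+\I\omega_0$ and~$\omega^{\bb{C}}_1=B_1+\I\omega_1$ of~\eqref{eq:scalar_dHYM_singola} in~$\alpha^{\bb{C}}$, the first step is to connect them by a weak geodesic~$\omega^{\bb{C}}_t$ in the space of almost calibrated representatives. The geodesic equation, written out in terms of the K\"ahler potential of~$\omega_t$ and a scalar potential for~$B_t$, couples a homogeneous complex Monge-Amp\`ere equation with a degenerate calibration-type equation. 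Under the assumption~$\beta\sin\hat{\vartheta}>\alpha\cos\hat{\vartheta}$ on a complex surface, the second equation reduces to a scalar Hessian equation of the form considered by Collins-Chu-Lee, so I would combine the~$C^{1,1}$ regularity theory for geodesics of K\"ahler potentials of~\cite{Chen2000, TosattiWeinkove_C11regularity} with the regularity theory for calibrated potentials of~\cite{CollinsYau_dHYMgeodesics, CollinsChuLee_calibrated11forms} to produce a geodesic with enough regularity for the convexity of~$\tilde{\m{M}}$ from Theorem~\ref{thm:complexified_Kenergy} to apply.

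Second, I would use this convexity to extract an infinitesimal symmetry. Since both endpoints are critical points of~$\tilde{\m{M}}$, the function~$t\mapsto \tilde{\m{M}}(\omega^{\bb{C}}_t)$ is convex with minima at~$t=0,1$, hence constant. Tracing back through the second-variation computation behind Theorem~\ref{thm:complexified_Kenergy}, equality in the convexity inequality forces the ``velocity''~$\dot\omega^{\bb{C}}_t$ to lie in the kernel of a Lichnerowicz-type operator, i.e.\ to be generated by a holomorphic vector field tangent to~$X$. Integrating this vector field produces a~$1$-parameter subgroup of~$\mrm{Aut}_0(X)$ whose time-$1$ map carries~$\omega^{\bb{C}}_0$ to~$\omega^{\bb{C}}_1$; passing to the quotient by the group of automorphisms fixing a reference form yields the statement modulo reduced automorphisms.

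In the toric, torus-invariant case I would bypass the pluripotential issues entirely. Torus invariance reduces~\eqref{eq:scalar_dHYM_singola} to a scalar equation on the moment polytope of~$(X,\alpha)$ for the symplectic potential, and the supercritical phase condition ensures that the integrand defining~$\tilde{\m{M}}$ is genuinely strictly convex in the Hessian of the symplectic potential. Strict convexity of~$\tilde{\m{M}}$ on the space of symplectic potentials, modulo the affine reparametrisations induced by the torus action, then gives uniqueness directly, in the spirit of Donaldson's approach to toric cscK metrics.

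The main obstacle is the first step. The weak geodesic equation for complexified K\"ahler forms is a genuinely coupled system, and a complete pluripotential theory analogous to the one used in~\cite{BermanBerndtsson_uniquness} is not available. The restriction to complex surfaces is precisely what allows me to rewrite the B-field part of the geodesic equation as a scalar Hessian equation to which the results of~\cite{CollinsChuLee_calibrated11forms} apply; extending Theorem~\ref{thm:uniqueness} to higher dimensions outside the toric setting will require new regularity results for geodesics of calibrated~$(1,1)$-forms that are beyond the scope of this paper.
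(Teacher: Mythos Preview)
Your surface argument has a genuine gap at exactly the point you flag as ``the main obstacle.'' You propose to solve the complexified geodesic system~\eqref{eq:geodesic_coupled} by combining the Chen--Tosatti--Weinkove regularity for the K\"ahler part with the Collins--Yau / Collins--Chu--Lee regularity for the calibrated part. But the latter results apply to the dHYM geodesic equation with a \emph{fixed} K\"ahler form, whereas in~\eqref{eq:geodesic_coupled} the K\"ahler form~$\omega_t$ is moving and the second equation genuinely couples~$\Re(\dot\varphi)$ and~$\Im(\dot\varphi)$. There is no existing result that lets you splice the two regularity theories together; this is precisely the content of Conjecture~\ref{conj:geodesics_existence}, which the paper leaves open. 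The positivity hypothesis~$\beta\sin\hat\vartheta>\alpha\cos\hat\vartheta$ does not by itself reduce the coupled equation to an uncoupled one.

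What the paper does instead is avoid the coupled geodesic equation entirely. The change of variables~$\chi=\sin\hat\vartheta\,B-\cos\hat\vartheta\,\omega$ transforms~\eqref{eq:scalar_dHYM} on a surface into the system~$\chi^2=\omega^2$, $s(\omega)=\tilde\gamma\,\Lambda_\omega\chi+c$, and the hypothesis~$\beta\sin\hat\vartheta>\alpha\cos\hat\vartheta$ is exactly the condition that~$[\chi]$ is K\"ahler. One then works with a functional~$\m{M}'$ on~$[\omega]\times[\chi]$ that is convex along \emph{two independent K\"ahler geodesics}, one in each class; these exist with~$\m{C}^{1,1}$ regularity by the classical theory, and the Berman--Berndtsson argument (auxiliary functional~$\m{F}_\mu$, Fredholm alternative, perturbation) goes through essentially verbatim. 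The decoupling is the missing idea.

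Your toric argument is also different from the paper's and, as written, incomplete: you assert strict convexity of~$\tilde{\m{M}}$ in the Hessian of the symplectic potential without justification. The paper instead notes that torus-invariant K\"ahler metrics are joined by \emph{smooth} K\"ahler geodesics, then invokes the dHYM stability theorem (Theorem~\ref{thm:dHYM_stability}) to produce, for each~$\omega_t$ along the geodesic, the unique dHYM solution~$B_t$; Remark~\ref{rmk:convex_along_dHYM} then gives convexity of~$\tilde{\m{M}}$ along the resulting path~$B_t+\I\omega_t$, and vanishing of the second variation forces the geodesic to be generated by a holomorphic vector field as in your Step~2.
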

We refer to Section~\ref{sec:dHYM} for the definition of \emph{supercritical phase}, which can be regarded as a weak positivity property of the class~$\beta$, as is the condition~$\beta\sin\hat{\vartheta}>\alpha\cos\hat{\vartheta}$ in the case of surfaces. We remark that we do not expect the strategy used to establish Theorem~\ref{thm:uniqueness} to hold outside of the supercritical (or even hypercritical) range. This is motivated by a relation between the critical phase conditions on~$X$ and on a manifold of dimension $n+1$ that is useful in the study of the geodesic equation (see the discussion following Conjecture~\ref{conj:geodesics_existence}).

As a by-product of the construction used to prove Theorem \ref{thm:complexified_Kenergy}, we will also describe a Futaki-like invariant and a Matsushima criterion for equation \eqref{eq:scalar_dHYM_singola}.

\bigskip

\paragraph*{Acknowledgements.} I would like to thank Jacopo Stoppa for encouraging me to study this problem and for insightful remarks on a previous version of this paper. I also thank Ruadha\'i Dervan and Vestislav Apostolov for some interesting discussions and many useful comments. In particular, I thank Vestislav Apostolov for inspiring the argument in Remark~\ref{rmk:kernel}. I thank the anonymous reviewer for their insightful comments and suggestions.

\bigskip

\section{The scalar curvature equation with B-field}

In this section we introduce and recall some notions, particularly regarding the \emph{deformed Hermitian Yang-Mills equation}, that will be useful in studying equation \eqref{eq:scalar_dHYM_singola}.

First, notice that~\eqref{eq:scalar_dHYM_singola} is actually a system of two equations: as~$s(\omega)$ is a real function and~$c\in\bb{R}$, a necessary condition for~\eqref{eq:scalar_dHYM_singola} to hold is that also~$\gamma (B+\I\omega)^n/\omega^n$ is a real function. Integrating over~$X$, we see that~$\gamma$ must satisfy
\begin{equation}\label{eq:condition_classes}
	\gamma\frac{(\beta+\I\alpha)^n}{\alpha^n}\in\bb{R}.
\end{equation}
We assume throughout that $(\beta+\I\alpha)^n\not=0$. Then, \eqref{eq:condition_classes} uniquely determines $\mrm{e}^{\I\hat{\vartheta}}\coloneqq\gamma\abs{\gamma}^{-1}$. Note that $\hat{\vartheta}$ is not uniquely determined. Similarly, the constant~$c$ in~\eqref{eq:scalar_dHYM_singola} is uniquely determined by~$\alpha$,~$\beta$, and~$\gamma$, as
\begin{equation}
	\frac{n\,c_1(X).\alpha^{n-1}}{\alpha^n}=c+\gamma\frac{(\beta+\I\alpha)^n}{\alpha^n}.
\end{equation}
Hence, we write~\eqref{eq:scalar_dHYM_singola} as a system of equations
\begin{equation}\label{eq:scalar_dHYM}
\begin{dcases}
	\Im\left(\mrm{e}^{-\I\hat{\vartheta}}(B+\I\omega)^n\right)=0\\
	s(\omega)=c+\abs{\gamma}\frac{\Re\left(\mrm{e}^{-\I\hat{\vartheta}}(B+\I\omega)^n\right)}{\omega^n},
\end{dcases}
\end{equation}
which is the original form in which~\eqref{eq:scalar_dHYM_singola} was introduced, in~\cite{SchlitzerStoppa}. The first equation in~\eqref{eq:scalar_dHYM} is called the \emph{deformed Hermitian Yang-Mills} (dHYM) equation. It can be written as an equation for the eigenvalues of the endomorphism~$\omega^{-1}B\in\mrm{End}(T^{1,0}X)$, given by composing the identifications
\begin{equation}
	T^{1,0}X\xrightarrow{\quad B\quad } T^{0,1}{}^*X\xrightarrow{\quad \omega\quad }T^{1,0}X.
\end{equation}
If we denote by~$\lambda_1,\dots,\lambda_n$ the eigenvalues of this endomorphism, the dHYM equation is equivalent to
\begin{equation}\label{eq:dHYM_angles}
	\sum_{a=1}^n\arccot(\lambda_a)=\hat{\vartheta}\mod 2\pi,
\end{equation}
where (as in~\cite{GaoChen_Jeq_dHYM}) ``$\mrm{arccot}$'' is the inverse cotangent function with range~$(0,\pi)$, namely~$\arccot(x)=\frac{\pi}{2}-\arctan(x)$. The function~$\Theta(\omega^{\bb{C}})\coloneqq\sum\arccot(\lambda_a)$ is called the \emph{Lagrangian phase operator} of the complexified K\"ahler form. Similarly, if we define the \emph{Lagrangian radius operator} as~$r(\omega^{\bb{C}})\coloneqq\prod_a(1+\lambda_a^2)^{\frac{1}{2}}$, then the second equation in~\eqref{eq:scalar_dHYM} is equivalent to
\begin{equation}
	s(\omega)=c+\gamma\,r(\omega^{\bb{C}})\,\mrm{e}^{\I\Theta(\omega^{\bb{C}})}
\end{equation}
so that~\eqref{eq:scalar_dHYM} becomes
\begin{equation}
\begin{dcases}
	\Theta(\omega^{\bb{C}})+\mrm{arg}(\gamma)=0 \mod 2\pi\\
	s(\omega)=c+\abs{\gamma}\,r(\omega^{\bb{C}}).
\end{dcases}
\end{equation}

Jacob and Yau~\cite{JacobYau_special_Lag} proved that, for any \emph{fixed} K\"ahler form~$\omega\in\alpha$, a solution~$B$ of the dHYM equation
\begin{equation}
	\Im\left(\mrm{e}^{-\I\vartheta}(B+\I\omega)^n\right)=0
\end{equation}
is the unique minimizer of the volume functional
\begin{equation}
	V_\omega:B\mapsto\int_X\abs*{(B+\I\omega)^n}.
\end{equation}
Moreover, the value of~$V_\omega$ at a solution of the dHYM equation is~$\abs*{(\beta+\I\alpha)^n}$ and in particular depends only on the complexified K\"ahler class. We have a similar property of solutions of~\eqref{eq:scalar_dHYM}, see also~\cite[Lemma~$1.8$]{ScarpaStoppa_toric_dHYM}.
\begin{lemma}
	Solutions of the scalar curvature equation with B-field minimize the \emph{complexified Calabi functional}
	\begin{equation}
		(B,\omega)\mapsto\int_X\abs*{s(\omega)-\gamma\frac{(B+\I\omega)^n}{\omega^n}}^2\omega^n.
	\end{equation}
\end{lemma}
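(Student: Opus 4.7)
My plan is to argue by the Cauchy--Schwarz inequality in $L^2(X,\omega^n)$, applied to the complex-valued function
\[
u := s(\omega)-\gamma\,\frac{(B+\I\omega)^n}{\omega^n}
\]
paired against the constant function $1$. The key observation is that the integral $\int_X u\,\omega^n$ is topological, hence constant on $\alpha\times\beta$. Indeed, $\int_X s(\omega)\,\omega^n = n\,c_1(X).\alpha^{n-1}$ and $\int_X \gamma(B+\I\omega)^n = \gamma\,(\beta+\I\alpha)^n$, and the assumption $\gamma(\beta+\I\alpha)^n\in\bb{R}$ (together with the defining relation for $c$ recalled at the start of this section) yields
\[
\int_X u\,\omega^n \;=\; n\,c_1(X).\alpha^{n-1} - \gamma(\beta+\I\alpha)^n \;=\; c\,\alpha^n.
\]
Note that in particular this mean value is real, even though $u$ itself is complex-valued.

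Cauchy--Schwarz then gives
\[
c^2(\alpha^n)^2 \;=\; \abs*{\int_X u\,\omega^n}^2 \;\le\; \left(\int_X \abs{u}^2\,\omega^n\right)\cdot\int_X\omega^n \;=\; \left(\int_X \abs{u}^2\,\omega^n\right)\alpha^n,
\]
so the complexified Calabi functional is bounded below by $c^2\,\alpha^n$, uniformly over $(B,\omega)\in\beta\times\alpha$. The equality case in Cauchy--Schwarz is that $u$ be a constant multiple of $1$, i.e.\ $u$ is $\omega^n$-a.e.\ constant; by the computation of its average the only possible value of this constant is $c$, which is exactly equation~\eqref{eq:scalar_dHYM_singola}. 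Hence solutions of the scalar curvature equation with B-field attain the lower bound $c^2\alpha^n$, and are therefore minimisers of the complexified Calabi functional.

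There is no genuine obstacle in this argument: it is essentially a formal consequence of Cauchy--Schwarz together with the observation that both $\int s(\omega)\omega^n$ and $\int\gamma(B+\I\omega)^n$ depend only on cohomological data. The small subtlety worth pointing out is precisely the role of the phase constraint on $\gamma$: it is what ensures that the average of the complex-valued integrand $u$ is a real number, and hence that $u$ can meaningfully equal a real constant on $X$. This mirrors the reasoning in~\cite[Lemma~1.8]{ScarpaStoppa_toric_dHYM} referenced above.
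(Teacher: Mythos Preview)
Your proof is correct and essentially identical to the paper's: both compute that the average of $u$ over $(X,\omega^n)$ is the topological constant $c$, and then bound the $L^2$-norm of $u$ below by $c^2\alpha^n$ with equality iff $u\equiv c$. The only cosmetic difference is that you invoke Cauchy--Schwarz against the constant function, whereas the paper expands $\lvert u\rvert^2=\lvert u-c\rvert^2+2c\,\Re(u-c)+c^2$ and integrates; these are of course the same inequality.
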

\begin{proof}
	Fix two forms~$\omega\in\alpha$ and~$B\in\beta$, and denote by~$\mrm{scB}_\gamma$ the complex-valued function~$s(\omega)-\gamma(B+\I\omega)^n/\omega^n$. Then the complexified Calabi functional is
	\begin{equation}
		(B,\omega)\mapsto\int_X\abs*{\mrm{scB}_\gamma}^2\omega^n
	\end{equation}
	and~$B+\I\omega$ solves the scalar curvature equation with~$B$-field if and only if~$\mrm{scB}_\gamma=c_\gamma$. The direct computation gives
	\begin{equation}
		\int_X\abs*{\mrm{scB}_\gamma}^2\omega^n=
		\int_X\abs*{\mrm{scB}_\gamma-c_\gamma}^2\omega^n
		+2\,c_\gamma\int_X\Re\left(\mrm{scB}_\gamma-c_\gamma\right)\omega^n
		+c_\gamma^2\,\mrm{Vol}(X,\omega)
	\end{equation}
	but the (real) constant~$c_\gamma$ is chosen precisely to satisfy
	\begin{equation}
		\int_X\Re\left(\mrm{scB}_\gamma-c_\gamma\right)\omega^n=0
	\end{equation}
	so that
	\begin{equation}
		\int_X\abs*{\mrm{scB}_\gamma}^2\omega^n=
		\int_X\abs*{\mrm{scB}_\gamma-c_\gamma}^2\omega^n
		+c_\gamma^2\,\mrm{Vol}(X,\omega)\geq c_\gamma^2\,\mrm{Vol}(X,\omega)
	\end{equation}
	and the equality is realised if and only if~$B+\I\omega$ solves~\eqref{eq:scalar_dHYM_singola}.
\end{proof}

\subsection{The deformed Hermitian Yang-Mills equation}\label{sec:dHYM}

For a fixed a K\"ahler form~$\omega\in\alpha$, the deformed Hermitian Yang-Mills equation was introduced in~\cite{LeungYauZaslow_dHYM} as a mirror to the special Lagrangian equation on Calabi-Yau manifolds. A systematic study of the properties of this equation on general K\"ahler manifolds started with~\cite{JacobYau_special_Lag}, and in recent years the dHYM equation has attracted great interest. The realisation that dHYM is a moment map equation~\cite{CollinsYau_dHYMgeodesics} suggested that there might be a stability notion characterising the existence of solutions~\cite{CollinsJacobYau_stability}, in the sense of Geometric Invariant Theory.

In this section, we give a brief account of some of the results about the dHYM equation that we need in our study of the system~\eqref{eq:scalar_dHYM}. The main notion we will need is that of \emph{supercritical phase} for the dHYM equation. For a fixed~$\omega\in\alpha$, the space of almost calibrated representatives of~$\beta$ is defined in~\cite{Collins_stabilitydHYM} as
\begin{equation}
	\m{H}_\omega=\set*{B\in\beta\tc\Re\left(\mrm{e}^{-\I\hat{\vartheta}}(B+\I\omega)^n\right)>0}
\end{equation}
and we can write it as using the Lagrangian phase operator as
\begin{equation}
	\m{H}_\omega=\bigsqcup_{\vartheta\in(0,n\pi),\ \mrm{e}^{\I\vartheta}=\mrm{e}^{\I\hat{\vartheta}}}
	\set*{B\in\beta\tc\left\lvert\Theta_\omega(B)-\vartheta\right\rvert<\frac{\pi}{2}}.
\end{equation}
In fact, at most one set of this disjoint union is non-empty, see~\cite[Lemma~$2.4$]{CollinsXieYau_dHYM}. It should be remarked that~$\m{H}_\omega$ might be empty.
\begin{definition}
	Assume that $\m{H}_\omega\not=\emptyset$. The \emph{lifted phase} of~$\beta$ with respect to~$\omega$, denoted $\hat{\vartheta}_\omega$, is the unique~$\hat{\vartheta}_\omega\in(0,n\pi)$ such that~$\mrm{e}^{\I\hat{\vartheta}_\omega}=\mrm{e}^{\I\hat{\vartheta}}$ and $$\set*{B\in\beta\tc\left\lvert\Theta_\omega(B)-\hat{\vartheta}_\omega\right\rvert<\frac{\pi}{2}}\not=\emptyset.$$
	We say that~$\beta$ has \emph{supercritical phase} if~$\hat{\vartheta}_\omega\in(0,\pi)$, and it has \emph{hypercritical phase} if~$\hat{\vartheta}_\omega\in(0,\pi/2)$.
\end{definition}
We will also need a similar notion of lifted phase for the complexified K\"ahler class~$\alpha^{\bb{C}}$; this might be problematic, as the lifted phase is, in general, not a topological quantity. We will therefore define below the supercritical phase condition only in the case when there are solutions of the dHYM equation in~$\alpha^{\bb{C}}$, see also~\cite[Proposition~$3.3$]{CollinsXieYau_dHYM} for related considerations.

The main conjecture that has guided research on the dHYM equation in recent years has been proposed in~\cite{CollinsJacobYau_stability}.
\begin{conj}\label{conj:dHYM_stability}
	Let~$(X,\omega)$ be a compact K\"ahler manifold, and let~$\beta$ be a~$(1,1)$-class with supercritical phase. Then there is a solution~$B\in\beta$ of the dHYM equation if and only if for all proper, irreducible analytic subvarieties~$V\subset X$ of dimension~$1\leq p< n$
	\begin{equation}
		\Im\left(\mrm{e}^{-\I\hat{\vartheta}}(\iota_V^*\alpha^{\bb{C}})^p\right)<0.
	\end{equation}
\end{conj}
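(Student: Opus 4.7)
This conjecture splits into an easier direction, that existence implies stability, and a much harder analytic direction, that stability implies existence. Since the full statement is a major open problem, I sketch approaches to both and flag where I expect the strategy to break down.

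For the direction from existence to stability, suppose $B\in\beta$ solves dHYM with lifted phase $\hat{\vartheta}\in(0,\pi)$, and let $V\subset X$ be a proper irreducible analytic subvariety of dimension $p$. At a smooth point of $V$, diagonalize $\omega^{-1}B$ in an $\omega$-orthonormal frame compatible with the splitting $TX|_V=TV\oplus NV$; write $\lambda_1,\dots,\lambda_n$ for the eigenvalues on $X$ and $\mu_1,\dots,\mu_p$ for those of the restriction $\iota_V^*(\omega^{-1}B)$. Cauchy interlacing gives $\lambda_{a+n-p}\leq\mu_a\leq\lambda_a$, and since $\arccot$ is decreasing one obtains $\sum_{a=1}^p\arccot(\mu_a)\leq\sum_{b=n-p+1}^n\arccot(\lambda_b)=\hat{\vartheta}-\sum_{b=1}^{n-p}\arccot(\lambda_b)<\hat{\vartheta}$, the strict inequality using that each $\arccot(\lambda_b)>0$ and that $V$ is proper. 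Expanding $\Im(\mrm{e}^{-\I\hat{\vartheta}}(\iota_V^*(B+\I\omega))^p)$ in this frame converts the pointwise phase inequality into a pointwise sign for the integrand, and integration over $V$ upgrades it to the cohomological statement of the conjecture, since the integrand differs from a topological representative of $\Im(\mrm{e}^{-\I\hat{\vartheta}}(\iota_V^*\alpha^{\bb{C}})^p)$ only by an exact form. This is in the template of Collins--Jacob--Yau.

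For the direction from stability to existence the natural strategy is a continuity method. Fix a path $\beta_t$ of classes from some $\beta_0$ admitting an obvious solution (e.g.\ after a large rescaling into a regime where constant solutions exist) to $\beta_1=\beta$, and write $B_t=B_0+\I\del\delbar u_t$. Openness follows from the implicit function theorem, since the linearization of the Lagrangian phase operator at a solution with supercritical phase is a second-order uniformly elliptic operator, invertible modulo constants. Closedness requires a priori estimates along the path: a $C^0$ bound on $u_t$, then a Hessian bound, then Evans--Krylov and Schauder to bootstrap to $C^{2,\alpha}$ and higher. The numerical stability hypothesis must enter in order to prevent blow-up: the positivity assumption on every proper $V$ should rule out concentration of the Lagrangian phase current along a limiting destabilising subvariety, analogously to how slope stability prevents bubbling in the cscK story.

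The decisive obstacle is the $C^2$ estimate in the genuinely supercritical range $\hat{\vartheta}\in[\pi/2,\pi)$. In the hypercritical range $\hat{\vartheta}\in(0,\pi/2)$ the dHYM equation is concave in the Hessian of $u$ and the maximum principle arguments of G.~Chen and of Collins--Jacob--Yau apply, settling the conjecture there. Beyond concavity, no general Hessian estimate is known, and complete proofs exist only for complex surfaces, the toric setting, and a handful of symmetric or fibered examples. A variational alternative more in line with the present paper would be to minimise a Kempf--Ness-type functional on the space $\m{H}_\omega$ of almost calibrated representatives and recast stability as coercivity; one would then trade the direct Hessian estimate for a regularity problem for the minimiser, which at present does not seem any easier.
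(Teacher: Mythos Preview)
The statement you are addressing is labelled a \emph{conjecture} in the paper, not a theorem; the paper offers no proof of its own. Immediately after stating it, the paper records the known results as Theorem~2.3, citing G.~Chen and Takahashi, and observes that in the projective case the conjecture is now a theorem. There is therefore no ``paper's own proof'' to compare your sketch against.

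That said, two comments on the sketch itself. The necessity direction is the standard Collins--Jacob--Yau argument and is essentially correct, modulo the minor slip that one cannot generically choose an $\omega$-orthonormal frame that simultaneously diagonalises $\omega^{-1}B$ and is adapted to the splitting $TV\oplus NV$; Cauchy interlacing for the compression of a self-adjoint operator still delivers the eigenvalue inequalities you need without that assumption.

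Your account of the sufficiency direction, however, is out of date. G.~Chen's work does not stop at the hypercritical range: his a~priori estimates and continuity argument cover the full supercritical range $\hat{\vartheta}\in(0,\pi)$, under a mild ``uniform'' strengthening of the numerical hypothesis. Takahashi then removed the uniformity, at least in the projective case, which is exactly what the paper quotes as Theorem~2.3. So the assertion that ``no general Hessian estimate is known'' beyond the hypercritical range, and that complete proofs exist ``only for complex surfaces, the toric setting, and a handful of symmetric or fibered examples,'' is incorrect. The genuinely open territory for this conjecture lies outside the supercritical range (and in the gap between the K\"ahler and projective settings), not where you locate it.
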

This expectation was confirmed first for surfaces in~\cite{CollinsJacobYau_stability}, using results of~\cite{JacobYau_special_Lag}, and received partial confirmations and refinements. A rather general version of the conjecture was finally established in~\cite{GaoChen_Jeq_dHYM}, and later improved by~\cite{Takahashi_dHYM_MoishezonCriterion}, which is our main reference for the stability result in this section.
\begin{thm}[Corollary~$1.4$ and Corollary~$1.5$ in~\cite{Takahashi_dHYM_MoishezonCriterion}]\label{thm:dHYM_stability}
	Fix a complexified K\"ahler class~$\alpha^{\bb{C}}=\beta+\I\alpha$, and assume that~$\hat{\vartheta}\in(0,\pi)$ is such that~$\mrm{e}^{-\I\hat{\vartheta}}(\alpha^{\bb{C}})^n\in\bb{R}_{>0}$. Then, the following are equivalent:
	\begin{enumerate}
		\item there is a K\"ahler class~$\chi$ such that for every~$1\leq p\leq n$
		\begin{equation}
			\Im\left(\mrm{e}^{-\I\hat{\vartheta}}(\alpha^{\bb{C}})^p.\chi^{n-p}\right)\leq 0
		\end{equation}
		and for every analytic subvariety~$V\subset X$ of dimension~$1\leq m<n$ and every~$1\leq p\leq m$,
		\begin{equation}
			\Im\left(\mrm{e}^{-\I\hat{\vartheta}}(\iota_V^*\alpha^{\bb{C}})^p.\chi^{m-p}\right)<0;
		\end{equation}
		\item for any K\"ahler form~$\omega\in\alpha$, there exists a (unique) solution~$B\in\beta$ of the \emph{supercritical} dHYM equation (c.f. equation~\eqref{eq:dHYM_angles})
		\begin{equation}
			\sum\arccot(\lambda_a(\omega^{-1}B))=\hat{\vartheta}.
		\end{equation}
	\end{enumerate}
	In particular, if~$\alpha$ is the first Chern class of an ample line bundle~$L\to X$, then Conjecture~\ref{conj:dHYM_stability} holds.
\end{thm}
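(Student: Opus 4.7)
The plan is to treat the two implications separately.

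For direction (2) $\Rightarrow$ (1) I would take the test class $\chi=\alpha$ itself. Diagonalising $\omega^{-1}B$ with eigenvalues $\lambda_a$ and using $\lambda_a+\I=\sqrt{1+\lambda_a^2}\,\mrm{e}^{\I\arccot(\lambda_a)}$, one computes
\begin{equation*}
\frac{(B+\I\omega)^p\wedge\omega^{n-p}}{\omega^n}=\binom{n}{p}^{\!-1}\sum_{\abs{S}=p}\prod_{a\in S}\sqrt{1+\lambda_a^2}\,\mrm{e}^{\I\sum_{a\in S}\arccot(\lambda_a)}.
\end{equation*}
Multiplying by $\mrm{e}^{-\I\hat{\vartheta}}$ shifts each phase by $-\sum_{a\notin S}\arccot(\lambda_a)\in(-\hat{\vartheta},0]\subset(-\pi,0]$, so the imaginary part is non-positive and vanishes only when $S=\{1,\dots,n\}$, giving the first family of numerical inequalities. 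The strict inequality on an analytic subvariety $V$ is obtained by restricting $B+\I\omega$ to $V$ and invoking the Jacob--Yau volume bound already quoted in the text: $\abs{(\iota_V^*\alpha^{\bb{C}})^m}$ is strictly dominated by the actual volume, since $\iota_V^*(B+\I\omega)$ does not generically solve a lower-dimensional dHYM equation; the intermediate values $p<m$ are handled similarly after pairing with an appropriate power of $\iota_V^*\omega$.

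For the harder direction (1) $\Rightarrow$ (2) I would run a continuity method, for instance deforming the phase from an endpoint close to $n\pi/2$ (where $B=0$ is trivially a solution, since $\arccot(0)=\pi/2$) down to the target $\hat{\vartheta}$. Openness is standard: at a supercritical solution the linearisation of the dHYM operator is elliptic and self-adjoint on mean-zero functions. Closedness demands \emph{a priori} $C^{2,\alpha}$ estimates, and by Sz\'ekelyhidi's general framework for concave fully nonlinear elliptic equations these reduce to the existence of a $\m{C}$-subsolution $\tilde{B}\in\beta$ satisfying
\begin{equation*}
\sum_{b\neq a}\arccot\bigl(\lambda_b(\omega^{-1}\tilde{B})\bigr)<\hat{\vartheta}\qquad\text{for every index }a.
\end{equation*}
Translating the numerical data in (1) into such a pointwise subsolution is the technical heart of the proof.

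The main obstacle is precisely this translation, since (1) is cohomological whereas a subsolution is a pointwise statement about a specific representative. The strategy of Chen, refined in the supercritical range by Takahashi, is to construct $\tilde{B}=B_0+\I\del\delbar\psi$ with $\psi$ assembled from solutions of auxiliary complex Hessian or Monge-Amp\`ere equations with reference class $\chi$; the strict inequalities on each subvariety $V$ provide the positivity of the source terms needed on a neighbourhood of $V$, in the spirit of the mass-concentration arguments used for the Nakai-Moishezon criterion. Finally, the ``in particular'' statement reduces, when $\alpha=c_1(L)$ is ample, by taking $\chi=\varepsilon\alpha$ and letting $\varepsilon\to 0^+$: the numerical inequalities degenerate in the limit to pointwise positivity along irreducible subvarieties, which is exactly the hypothesis of Conjecture~\ref{conj:dHYM_stability}.
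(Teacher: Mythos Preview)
The paper does \emph{not} contain a proof of this theorem: it is quoted verbatim as Corollaries~1.4 and~1.5 of Takahashi's paper and used as a black box (to justify, later on, that supercritical solutions of dHYM exist for every K\"ahler form in~$\alpha$). So there is no proof in the present paper to compare your proposal against.

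That said, your sketch is broadly in the spirit of the arguments in Chen and Takahashi, but two points deserve correction. First, your continuity path for (1)$\Rightarrow$(2) starts ``close to $n\pi/2$, where $B=0$ is trivially a solution''. This cannot work as stated: the class~$\beta$ is fixed, so $B=0$ is not an available representative unless $\beta=0$. The actual continuity arguments deform either the reference K\"ahler form (adding a large multiple of a fixed metric), or run a twisted path within the class~$\beta$; in either case the starting point is produced from the subsolution, not from~$B=0$. Second, in (2)$\Rightarrow$(1) your treatment of the strict inequality on a subvariety~$V$ is not right: the eigenvalues of $(\omega\vert_V)^{-1}(B\vert_V)$ are \emph{not} a subset of the eigenvalues of $\omega^{-1}B$, so ``dropping some positive arccotangents'' is not literally what happens. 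The correct argument uses that a supercritical solution forces the restricted phase operator on~$V$ to take values strictly below~$\hat\vartheta$ (this is where supercriticality, and not merely the dHYM equation, is used), which then integrates to the strict inequality; the Jacob--Yau volume bound you invoke is a different statement and does not by itself give the sign.
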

As the first condition in Theorem \ref{thm:dHYM_stability} only depends on $\alpha$ and $\beta$, we can use it to deduce some important properties of solutions of the dHYM equation for a varying K\"ahler form.
\begin{cor}\label{cor:smoothdependence}
	Assume that for some K\"ahler form $\omega_0\in\alpha$ there exists a solution of the dHYM equation~$B_0\in\beta$ that lies in the supercritical phase range, i.e.\
\[
	\Theta_{\omega_0}(B_0)\equiv\hat{\vartheta}\in(0,\pi).
\]
Then, for any~$\omega\in\alpha$ there is~$B_\omega\in\beta$ solving the dHYM equation, in the same phase range $(0,\pi)$. Moreover, the map $\omega\to B_\omega$ is continuous and differentiable.
\end{cor}
\begin{proof}
 	The first part is a direct consequence of Theorem \ref{thm:dHYM_stability}. As for the differentiability of $\omega\mapsto B_\omega$, this follows from the Implicit Function Theorem. The linearisation of the dHYM equation at $\omega$ is a uniformly elliptic second order operator $\Delta'$ \cite[\S$2$]{CollinsJacobYau_stability}, that in a system of coordinates where $\omega$ is the Euclidean metric and $B=\sum_j\I\lambda_jdz^j\wedge d\bar{z}^j$ has the form
 	\begin{equation}
 		\Delta'(\psi)=\textstyle\sum\nolimits_j(1+\lambda_j^2)\partial_j\partial_{\bar{j}}\psi.
	\end{equation}
	As this operator has trivial kernel (after normalising the B-field potentials $\psi$), we can use the IFT in the H\"older spaces $\m{C}^{k,\alpha}$ and the uniqueness of solutions to deduce that $\omega\mapsto B_\omega$ is smooth in any $\m{C}^k$ topology.
\end{proof}
Hence, it makes sense to pose the following.
\begin{definition}
	Let~$X$ be a compact complex manifold, and let~$\alpha^{\bb{C}}$ be a complexified K\"ahler class such that there exists an almost calibrated solution~$B+\I\omega\in\alpha^{\bb{C}}$ of the dHYM equation. We say that~$\alpha^{\bb{C}}$ has \emph{supercritical phase} if~$\Theta(B+\I\omega)\in(0,\pi)$. Likewise, if~$\Theta(B+\I\omega)\in(0,\pi/2)$ we say that~$\alpha^{\bb{C}}$ has \emph{hypercritical phase}.
\end{definition}
\begin{rmk}
	Conjecture~\ref{conj:dHYM_stability} was at least partially inspired by a stability notion for the J-equation, see \cite{LejmiSzekelyhidi_Jflow}.
\end{rmk}

\section{The complexified K-energy functional}\label{sec:complexified_Kenergy}

The goal of this section is to prove Theorem~\ref{thm:complexified_Kenergy}. We will describe the system~\eqref{eq:scalar_dHYM} as the Euler-Lagrange equation for a functional on the space~$\alpha^{\bb{C}}$. Equivalently (through the~$\partial\bar{\partial}$-Lemma), this can be considered as a functional on an open and convex subset of~$\m{C}^\infty(M,\mathbb{R})^2\cong\m{C}^\infty(M,\bb{C})$. It is defined, up to a constant, by its first variation:
\begin{equation}\label{eq:firstvariation}
\left(\mathrm{d}\tilde{\m{M}}\right)_{\omega^{\bb{C}}}(\varphi)=\sigma_{\omega^{\bb{C}}}(\varphi)\coloneqq \int_X\,\Im \left[\varphi\left(\gamma(\omega^{\bb{C}})^n-(s(\omega)-c_\gamma)\omega^n\right)\right].
\end{equation}
We call~$\tilde{\m{M}}$ the \emph{complexified K-energy} of the class~$\alpha^{\bb{C}}$. Here and in what follows we always let~$\omega^{\bb{C}}=B+\I\omega$.
 
We should check that the~$1$-form on the right-hand side of~\eqref{eq:firstvariation} is exact. It is sufficient to verify that it is closed; this amounts to the following claim.
\begin{lemma}
For every~$\varphi_1,\varphi_2\in\m{C}^\infty(M,\bb{C})$, the expression
\begin{equation}
\partial_{t=0}\left(\sigma_{\omega^{\bb{C}}+\I\del\delbar t\varphi_2}(\varphi_1)\right)
\end{equation}
is symmetric in~$1$ and~$2$.
\end{lemma}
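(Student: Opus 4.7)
The plan is to split $\sigma_{\omega^{\bb{C}}}(\varphi)$ into two pieces whose variations I can verify for symmetry independently:
\[
\sigma^{\mrm{dHYM}}_{\omega^{\bb{C}}}(\varphi) := \int_X \Im\bigl[\gamma\,\varphi\,(\omega^{\bb{C}})^n\bigr],\qquad \sigma^{\mrm{cscK}}_{\omega^{\bb{C}}}(\varphi) := \int_X \Im(\varphi)\,(s(\omega)-c_\gamma)\,\omega^n,
\]
where in the second piece I pulled $\Im$ out using the fact that $(s(\omega)-c_\gamma)\omega^n$ is a real top form. The infinitesimal variation $\omega^{\bb{C}}\mapsto\omega^{\bb{C}}+\I\del\delbar t\varphi_2$ decomposes into the real deformations $B\mapsto B+t\,\I\del\delbar\Re(\varphi_2)$ and $\omega\mapsto\omega+t\,\I\del\delbar\Im(\varphi_2)$, so the two pieces depend on $\varphi_2$ in quite different ways.

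For the dHYM piece, direct differentiation gives
\[
\partial_{t=0}\,\sigma^{\mrm{dHYM}}_{\omega^{\bb{C}}+\I\del\delbar t\varphi_2}(\varphi_1) = n\int_X \Im\bigl[\gamma\,\varphi_1\,(\I\del\delbar\varphi_2)\wedge(\omega^{\bb{C}})^{n-1}\bigr].
\]
Since $(\omega^{\bb{C}})^{n-1}$ is $d$-closed, two integrations by parts (once in $\delbar$, once in $\del$) yield
\[
\int_X \varphi_1\,\del\delbar\varphi_2\wedge(\omega^{\bb{C}})^{n-1} = \int_X \varphi_2\,\del\delbar\varphi_1\wedge(\omega^{\bb{C}})^{n-1},
\]
an identity that holds verbatim for complex-valued $\varphi_1,\varphi_2$. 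Multiplying by $\I\gamma$ and taking imaginary parts gives the expression symmetric in $1\leftrightarrow 2$.

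For the cscK piece, only the K\"ahler variation $\omega\mapsto\omega+t\,\I\del\delbar\Im(\varphi_2)$ contributes, and the claim reduces to closedness of the $1$-form $u\mapsto -\int_X u\,(s(\omega)-c_\gamma)\,\omega^n$ on the space of K\"ahler potentials, evaluated on $u=\Im(\varphi_1),\Im(\varphi_2)$. Writing $c_\gamma = \bar s + (c_\gamma - \bar s)$ with $\bar s$ the topological average of the scalar curvature on $\alpha$, this $1$-form splits as the differential of the classical Mabuchi K-energy --- which is closed essentially by definition of $\m{M}$ --- plus $(\bar s - c_\gamma)\int_X u\,\omega^n$, whose mixed variation on the space of potentials is manifestly symmetric after one further integration by parts using $d\omega = 0$.

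No step presents a real obstacle: the lemma amounts to checking that the two integration-by-parts arguments familiar from the volume functional (for the dHYM piece) and from the K-energy (for the cscK piece) go through in tandem for complex-valued potentials. The only point worth tracking carefully is the decomposition of the complex variation $\I\del\delbar\varphi_2$ into real deformations of $B$ and $\omega$, which is precisely what lets the cscK piece reduce to the classical Mabuchi setup.
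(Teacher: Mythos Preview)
Your proposal is correct and follows essentially the same route as the paper: both split $\sigma$ into the dHYM piece $\int_X\Im[\gamma\,\varphi\,(\omega^{\bb{C}})^n]$ and the scalar-curvature piece, dispatch the latter by appeal to the classical K-energy (the paper just says ``for $\gamma=0$, this is a standard result''), and handle the former by the same integration-by-parts identity $\int_X\varphi_1\,\I\del\delbar\varphi_2\wedge(\omega^{\bb{C}})^{n-1}=\int_X\varphi_2\,\I\del\delbar\varphi_1\wedge(\omega^{\bb{C}})^{n-1}$. Your extra sentence decomposing $c_\gamma=\bar s+(c_\gamma-\bar s)$ is not needed---the constant in front of $\int_Xu\,\omega^n$ plays no role in closedness---but it does no harm.
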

\begin{proof}
For~$\gamma=0$, this is a standard result, used in the definition of the K-energy. So it will be enough to prove the result for~$\sigma'$ defined by
\begin{equation}
\sigma'_{\omega^{\bb{C}}}(\varphi)\coloneqq \int_X\,\Im\left[\varphi\,\gamma\,(\omega^{\bb{C}})^n\right].
\end{equation}
Note that~$\sigma'$ is essentially the \emph{complexified Calabi-Yau functional} considered in~\cite{CollinsYau_dHYMgeodesics}. The only difference is that we also must consider variations of the K\"ahler form~$\omega$ and we consider~$\omega^{\bb{C}}=B+\I\omega$ rather than~$\omega^{\bb{C}}=\omega+\I B$. A direct computation gives
\begin{equation}
\begin{split}
\partial_{t=0}\sigma'_{\omega^{\bb{C}}+\I\del\delbar t\varphi_2}(\varphi_1)&=n\int_X\,\Im \left[\gamma\,\varphi_1\,\I\del\delbar\varphi_2\wedge(\omega^{\bb{C}})^{n-1}\right]=\\
&=n\int_X\,\Im\left[\gamma\,\varphi_2\,\I\del\delbar\varphi_1\wedge(\omega^{\bb{C}})^{n-1}\right]
\end{split}
\end{equation}
where the last equality follows from integration by parts, noting that~$\I\del\delbar$ is a real operator.
\end{proof}
It is possible to find a more explicit expression for~$\tilde{\m{M}}$ from Chen-Tian's expansion of the K-energy and the expansion for the complexified Calabi functional in~\cite[Proposition~$2.14$]{CollinsYau_dHYMgeodesics}. Having fixed a reference point~$\omega^{\bb{C}}_0=B_0+\I\omega_0$ of the complexified K\"ahler class,~$\tilde{\m{M}}$ equals to
\begin{equation}\label{eq:Kenergy_decomposition}
\begin{split}
\tilde{\m{M}}(u+\I v)=&\m{H}(v)+\frac{c_\gamma}{n+1}\m{E}(v)-\m{E}_{\mrm{Ric}(\omega_0)}(v)+\frac{1}{n+1}\Im\left(\gamma\,\m{E}^{\bb{C}}(u+\I v)\right).
\end{split}
\end{equation}
Here~$\m{H}$ is the \emph{entropy functional} of the K\"ahler class~$\alpha$ (with respect to the reference form~$\omega_0$), that is
\begin{equation}
\m{H}(v)=\int\log\left(\frac{\omega_v^n}{\omega_0^n}\right)\omega_v^n,
\end{equation}
while~$\m{E}$,~$\m{E}_\eta$ are respectively the \emph{energy functional} of the K\"ahler class and its \emph{twisted} version, and~$\m{E}^{\bb{C}}$ is the \emph{complexified energy} of~$\alpha^{\bb{C}}$, defined respectively as
\begin{equation}
\begin{split}
\m{E}(v)\coloneqq &\int_Xv\sum_{j=0}^n\omega_0^j\wedge\omega_v^{n-j},\\
\m{E}_\eta(v)\coloneqq &\int_Xv\sum_{j=0}^{n-1}\eta\wedge\omega_0^j\wedge\omega_v^{n-j-1},\\
\m{E}^{\bb{C}}(\varphi)\coloneqq &\int_X\varphi\sum_{j=0}^n(\omega_0^{\bb{C}})^j\wedge(\omega_\varphi^{\bb{C}})^{n-j}.
\end{split}
\end{equation}

\subsection{Second variation of the complexified K-energy}

Fix a reference point~$\omega_0^{\bb{C}}$ in~$\alpha^{\bb{C}}$. Let~$\varphi_t\in\m{C}^\infty(X,\bb{C})$ be a smooth path of complexified potentials. We can compute the second variation of~$\tilde{\m{M}}$ along~$\omega_t^\bb{C}=\omega_0^{\bb{C}}+\I\del\delbar\varphi_t$, similarly to what is done for the K-energy in the cscK problem.
\begin{equation}\label{eq:secondvar}
\begin{split}
\partial^2_t\tilde{\m{M}}(\varphi_t)=&\partial_t\sigma_{\omega_t^{\bb{C}}}(\dot{\varphi}_t)=\partial_t\int_X\,\Im\left[\dot{\varphi}_t\big(\gamma(\omega_t^{\bb{C}})^n-(s(\omega_t)-c_\gamma)\omega_t^n\big)\right]=\\
=&\int_X\,\Im\left[\gamma\left(\ddot{\varphi}_t(\omega_t^{\bb{C}})^n-n\,\I\del\dot{\varphi}_t\wedge\delbar\dot{\varphi}_t\wedge(\omega_t^{\bb{C}})^{n-1}\right)\right]+\\&+\norm*{\m{D}_{\omega_t}\,\Im(\dot{\varphi_t})}^2_{L^2(\omega_t)}-\int_X\left(\Im(\ddot{\varphi}_t)-\norm*{\del\Im(\dot{\varphi}_t)}^2_{\omega_t}\right)(s(\omega_t)-c_\gamma)\omega_t^n.
\end{split}
\end{equation}
It is useful to have a more explicit expression for this second variation. To ease notation, we stop indicating the dependence on~$t$ explicitly; also, decompose~$\varphi$ in real and imaginary parts as~$\varphi=\varphi_1+\I\varphi_2$. Then
\begin{equation}\label{eq:secondvar_expanded}
\begin{split}
\partial^2_t\tilde{\m{M}}(\varphi_t)=&\norm*{\m{D}\,\dot{\varphi_2}}^2_{L^2(\omega)}-\int_X\left(\ddot{\varphi}_2-\norm*{\del\dot{\varphi}_2}_\omega^2\right)\left(\left(s(\omega)-c_\gamma\right)\omega^n-\Re\left(\gamma(\omega^{\bb{C}})^n\right)\right)\\
&+\int_X\ddot{\varphi}_1\Im\left(\gamma(\omega^{\bb{C}})^n\right)+\int_X\norm*{\del\dot{\varphi}_2}^2\Re \left(\gamma(\omega^{\bb{C}})^n\right)\\
&-\int_X\left(\I\del\dot{\varphi}_1\wedge\delbar\dot{\varphi}_1-\I\del\dot{\varphi}_2\wedge\delbar\dot{\varphi}_2\right)\wedge\Im\left[\gamma(\omega^{\bb{C}})^{n-1}\right]\\
&-n\int_X\left(\I\del\dot{\varphi}_2\wedge\delbar\dot{\varphi}_1+\I\del\dot{\varphi}_1\wedge\delbar\dot{\varphi}_2\right)\wedge\Re\left[\gamma(\omega^{\bb{C}})^{n-1}\right].
\end{split}
\end{equation}
From this expression, it is relatively easy to deduce
\begin{lemma}\label{lemma:secondvar}
	The Hessian of~$\tilde{\m{M}}$ at an almost calibrated critical point is positive semi definite.
\end{lemma}
\begin{proof}
A critical point~$\varphi$ of~$\tilde{\m{M}}$ gives a solution~$\omega^{\bb{C}}\coloneqq \omega^{\bb{C}}_\varphi$ of~\eqref{eq:scalar_dHYM}. So, consider the second variation of~$\tilde{\m{M}}$ along a path~$\varphi_t$ such that~$\varphi_0=\varphi$. Equation~\eqref{eq:secondvar_expanded} gives
\begin{equation}\label{eq:secondvar_critpoint}
\begin{split}
\partial^2_t\tilde{\m{M}}(\varphi_t)=&\norm*{\m{D}\,\dot{\varphi_2}}^2_{L^2(\omega)}+\int_X\norm*{\del\dot{\varphi}_2}^2\Re\left(\gamma(\omega^{\bb{C}})^n\right)\\
&-n\int_X\left(\I\del\dot{\varphi}_1\wedge\delbar\dot{\varphi}_1-\I\del\dot{\varphi}_2\wedge\delbar\dot{\varphi}_2\right)\wedge\Im\left[\gamma(\omega^{\bb{C}})^{n-1}\right]\\
&-n\int_X\left(\I\del\dot{\varphi}_2\wedge\delbar\dot{\varphi}_1+\I\del\dot{\varphi}_1\wedge\delbar\dot{\varphi}_2\right)\wedge\Re\left[\gamma(\omega^{\bb{C}})^{n-1}\right].
\end{split}
\end{equation}
The first term on the right-hand side of~\eqref{eq:secondvar_critpoint} is non-negative. We now show that the sum of the other terms is non-negative, by computing the sum of the integrands.

Consider a frame~$\zeta^1,\dots,\zeta^n$ for~$T^*X$ in which~$\omega$ is the standard symplectic form~$\omega=\I\sum_a\zeta^a\wedge\bar{\zeta}^a$ and~$B$ is diagonal,~$B=\I\sum_a\lambda_a\zeta^a\wedge\bar{\zeta}^a$. We can write~$\del\dot{\varphi}_1=u_a\zeta^a$ and~$\del\dot{\varphi}_2=v_a\zeta^a$ for some locally defined function~$u_1,\dots,u_n$,~$v_1,\dots,v_n\in\m{C}^\infty(X,\bb{C})$. We can then define the radii and angles as~$r_a\coloneqq(1+\lambda^2_a)^{\frac{1}{2}}$ and~$\vartheta_a\coloneqq\arccot(\lambda_a)$.

Let~$\gamma=\abs{\gamma}\,\mrm{e}^{-\I\hat{\vartheta}}$, and recall that~$\hat{\vartheta}$ is determined uniquely by the topology, up to multiples of~$2\pi$. As~$\omega$ and~$B$ solve the dHYM equation,~$\mrm{exp}(-\I\hat{\vartheta}+\I\sum\vartheta_a)=1$, and the integrands on the right-hand side of~\eqref{eq:secondvar_critpoint} become
\begin{equation}
\begin{gathered}
\norm*{\del\dot{\varphi}_2}^2\Re \left(\gamma(\omega^{\bb{C}})^n\right)=\abs{\gamma}\,\omega^n\,r(\omega^{\bb{C}})\,\sum\abs{v_a}^2\\
n\left(\I\del\dot{\varphi}_1\wedge\delbar\dot{\varphi}_1-\I\del\dot{\varphi}_2\wedge\delbar\dot{\varphi}_2\right)\wedge\Im\left[\gamma(\omega^{\bb{C}})^{n-1}\right]=-\abs{\gamma}\,\omega^n\,r(\omega^{\bb{C}})\,\sum\frac{\abs{u_a}^2-\abs{v_a}^2}{r_a}\sin(\vartheta_a)\\
n\left(\I\del\dot{\varphi}_2\wedge\delbar\dot{\varphi}_1+\I\del\dot{\varphi}_1\wedge\delbar\dot{\varphi}_2\right)\wedge\Re\left[\gamma(\omega^{\bb{C}})^{n-1}\right]=\abs{\gamma}\,\omega^n\,r(\omega^{\bb{C}})\,\sum\frac{u_a\bar{v}_a+\bar{u}_av_a}{r_a}\cos(\vartheta_a).
\end{gathered}
\end{equation}
To prove that the sum of the integrands is positive, it will be enough to show that, for each~$a\in\set{1,\dots,n}$
\begin{equation}\label{eq:autoval}
\abs{v_a}^2+\frac{\abs{u_a}^2-\abs{v_a}^2}{r_a}\sin(\vartheta_a)-\frac{u_a\bar{v}_a+\bar{u}_av_a}{r_a}\cos(\vartheta_a)
\end{equation}
is non-negative. As~$\vartheta_a=\arccot\lambda_a=\frac{\pi}{2}-\arctan\lambda_a$, we can rewrite~\eqref{eq:autoval} as
\begin{equation}
\abs{v_a}^2+\frac{\abs{u_a}^2-\abs{v_a}^2}{1+\lambda_a^2}-\lambda_a\frac{u_a\bar{v}_a+\bar{u}_av_a}{1+\lambda_a^2}=\frac{1}{1+\lambda_a^2}\left(\lambda_a^2\abs{v_a}^2+\abs{u_a}^2-\lambda_a(u_a\bar{v}_a+\bar{u}_av_a)\right),
\end{equation}
but this is just~$(1+\lambda_a^2)^{-1}\abs*{\lambda_av_a-u_a}^2$.
\end{proof}
\begin{rmk}\label{rmk:convex_along_dHYM}
From the proof of Lemma~\ref{lemma:secondvar}, it is also clear that if~$\varphi_1(t),\varphi_2(t)\in\m{C}^\infty(X,\bb{R})$ are two families of functions such that~$\ddot{\varphi}_2=\norm{\dot{\varphi}_2}^2_{\omega_{\varphi_2}}$ (i.e.~$\varphi_2$ is a K\"ahler geodesic) and~$B+\I\del\delbar\varphi_1$ is a solution of the dHYM equation with respect to~$\omega_{\varphi_2}$, then~$\tilde{\m{M}}$ is convex along the path~$\varphi_1(t)+\I\varphi_2(t)$.
\end{rmk}
\begin{rmk}\label{rmk:kernel}
The proof of Lemma~\ref{lemma:secondvar} tells us something more: the kernel of the linearization of~\eqref{eq:scalar_dHYM} around a solution~$(\omega,B)$ is given by pairs~$\varphi_1,\varphi_2\in\m{C}^\infty(X,\bb{R})$ such that
\begin{align}
&\delbar\left(\nabla^{1,0}\varphi_2\right)=0\label{eq:holpotential_omega}\\
&\nabla^{1,0}\varphi_2\lrcorner B=\I\delbar\varphi_1.\label{eq:holpotential_beta}
\end{align}
We claim that for each~$\varphi_2\in\m{C}^\infty(X,\bb{R})$ that satisfies~\eqref{eq:holpotential_omega}, 
there exists a~$\varphi_1$, unique up to the addition of a constant, satisfying~\eqref{eq:holpotential_beta}. Indeed,~\eqref{eq:holpotential_omega} tells us that~$Z\coloneqq\nabla^{1,0}\varphi_2$ is a holomorphic vector field with a holomorphy potential with respect to~$\omega$. So for any other K\"ahler metric~$\omega'$, there is some~$\varphi'$ such that~$Z=\nabla^{1,0}_{\omega'}\varphi'$, see~\cite{LeBrun-Simanca}. In particular, this holds for the K\"ahler metric~$\omega+t\,B$, for some small~$t$. In other words, there is some function~$\psi$ such that
\begin{equation}
\delbar\psi=Z\lrcorner(\omega+t\,B)=\delbar\varphi_2+t\,Z\lrcorner B
\end{equation}
so~$\varphi_1=t^{-1}(\psi-\varphi_2)$ will satisfy~\eqref{eq:holpotential_beta}. It is not guaranteed, however, that we can choose~$\varphi_1$ to be a \emph{real} function: this happens if and only if~$\m{L}_{J\nabla\varphi_2}B=0$. To see this, notice that~\eqref{eq:holpotential_beta} implies
\begin{equation}
\m{L}_{J\nabla\varphi_2}B=\mrm{d}\left((J\nabla\varphi_1)\lrcorner B\right)=\del\delbar(\bar{\varphi}_1-\varphi_1)
\end{equation}
so~$\m{L}_{J\nabla\varphi_2}B=0$ if and only if~$\varphi_1$ is real (up to a constant). Now, recall that we are assuming~$(\omega,B)$ to be a solution of the dHYM equation; if~$f_t$ is the flow of the vector field~$J\nabla\varphi_2$, also~$(f_t^*\omega,f_t^*B)$ will be a solution of dHYM. But~$f_t$ preserves~$\omega$, as it is the flow of a Hamiltonian vector field, and since for any fixed~$\omega$ there is a unique solution of dHYM we conclude that also~$f_t^*B=B$ for every~$t$. In particular,~$\m{L}_{J\nabla\varphi_2}B=0$.

Summing up, we have seen that the kernel of the linearisation of~\eqref{eq:scalar_dHYM} around a solution~$(\omega,B)$ is in one-to-one correspondence with the set of holomorphy potentials
\begin{equation}\label{eq:kernel_liner}
\set*{\varphi\in\m{C}^\infty(X,\bb{R})\tc\delbar\nabla^{1,0}_\omega\varphi=0}/\bb{R}.
\end{equation}
\end{rmk}
As these potentials generate holomorphic vector fields on $X$, we find
\begin{cor}
	Let $\omega^{\bb{C}}$ be an almost calibrated critical point of $\tilde{\m{M}}$, and suppose that $\mrm{Aut}(X)$ is discrete. Then, $\omega^{\bb{C}}$ is a local minimum of $\tilde{\m{M}}$.
\end{cor}

The scalar curvature equation with~$B$-field is a moment map equation for an infinite-dimensional Hamiltonian action; when~$\beta$ is a Hodge class, this was shown in~\cite{SchlitzerStoppa}, while the result for general classes is due to~\cite{ScarpaStoppa_toric_dHYM}. Having identified the kernel of the linearisation around a solution as~\eqref{eq:kernel_liner}, we obtain from the general theory of~\cite{Wang_Futaki} an analogue of the classical obstructions to the existence of cscK metrics, namely Matsushima's criterion and the vanishing of the Futaki invariant.

A Futaki-like invariant was already introduced  in~\cite{ScarpaStoppa_toric_dHYM}; essentially, this coincides with the~$1$-form~\eqref{eq:firstvariation} evaluated on the complexification of~\eqref{eq:kernel_liner}
\begin{equation}
\f{h}_0\coloneqq\set*{\xi\in H^0(X,T^{1,0}X)\tc\xi\mbox{ has a zero}}\cong\set*{\varphi\in\m{C}^\infty(X,\bb{C})\tc\delbar\nabla^{1,0}_\omega\varphi=0}/\bb{C}.
\end{equation}
More precisely, for every~$\xi\in\f{h}_0$ and any~$2$-form~$\eta$ there is a function~$\varphi(\xi,\eta)$ such that~$\xi\lrcorner\eta=\I\delbar\varphi(\xi,\eta)$. Hence we can define a map~$\f{h}_0\to\bb{C}$ by
\begin{equation}
\m{F}_{\omega,B}(\xi)=\int\varphi(\xi,\omega)\big(c_\gamma+\Re\left(\gamma(B+\I\omega)^n\right)-s(\omega)\omega^n\big)+\int\varphi(\xi,B)\,\Im\left(\gamma(B+\I\omega)^n\right).
\end{equation}
It does not depend on the choice of~$B+\I\omega\in\alpha^{\bb{C}}$ and vanishes if there is a solution of~\eqref{eq:scalar_dHYM_singola}.

In Remark~\ref{rmk:kernel} we noticed that if~$B+\I\omega$ solves~\eqref{eq:scalar_dHYM_singola}, then for any~$\xi\in\f{h}_0$, the potential~$\varphi(\xi,\omega)$ is a real function if and only if~$\varphi(\xi,B)$ is. These vector fields are all Killing, so from~\cite[Corollary~$12$]{Wang_Futaki} we obtain
\begin{cor}
If there is a solution of~\eqref{eq:scalar_dHYM_singola}, then~$\f{h}_0$ is a reductive Lie algebra.
\end{cor}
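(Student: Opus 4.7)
The plan is to apply the infinite-dimensional Matsushima-type criterion of \cite{Wang_Futaki} (Corollary~12) to the moment map description of equation~\eqref{eq:scalar_dHYM_singola}, which is available via the framework of~\cite{SchlitzerStoppa} in the Hodge case and of~\cite{ScarpaStoppa_toric_dHYM} in general. Abstractly, the principle is that at a zero of a moment map for a Hamiltonian action admitting a complexification, the complex isotropy Lie algebra is reductive provided the real isotropy is the Lie algebra of a compact group. In our setting the complex isotropy is identified with $\f{h}_0$, so the task reduces to showing compactness of the real isotropy.

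Identifying the real isotropy is already done in Remark~\ref{rmk:kernel}: at a solution $(\omega,B)$ it is in bijection with the real holomorphy potentials
\[
\set*{\varphi\in\m{C}^\infty(X,\bb{R})\tc\delbar\nabla^{1,0}_\omega\varphi=0}/\bb{R}.
\]
For any such $\varphi$ the vector field $J\nabla\varphi$ is Killing with respect to $\omega$ (being a real Hamiltonian vector field for a K\"ahler metric) and, crucially, the argument of Remark~\ref{rmk:kernel} shows that $\m{L}_{J\nabla\varphi}B=0$. Hence $J\nabla\varphi$ integrates to a one-parameter subgroup of diffeomorphisms of $X$ preserving both $\omega$ and $B$ simultaneously.

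From this I would conclude by a standard compactness argument: the isometry group of the compact K\"ahler manifold $(X,\omega)$ is compact, and the joint stabiliser of $\omega$ and $B$ inside it is closed, hence itself compact. Its Lie algebra---which is precisely the real isotropy identified above---is therefore the Lie algebra of a compact group, and so is reductive as a real Lie algebra. Its complexification, namely $\f{h}_0$, is then a reductive complex Lie algebra, which is the desired conclusion.

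The main subtlety, already isolated in Remark~\ref{rmk:kernel}, is verifying that the Hamiltonian flow of a real holomorphy potential $\varphi$ really preserves the B-field: this is not automatic, and relies essentially on uniqueness of solutions of the dHYM equation for a fixed K\"ahler form. Once this is in hand, the corollary follows immediately from the abstract moment-map Matsushima principle, modulo the sanity check---part of the moment map construction in the cited references---that the complexification of the real isotropy in the infinite-dimensional symmetry group does indeed recover $\f{h}_0$ as defined here.
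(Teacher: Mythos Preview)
Your proposal is correct and follows essentially the same approach as the paper: both arguments identify the real isotropy at a solution via Remark~\ref{rmk:kernel}, observe that the corresponding vector fields are Killing (hence generate a compact group), and then invoke \cite[Corollary~12]{Wang_Futaki} to conclude reductivity of $\f{h}_0$. Your write-up is simply a more detailed version of the paper's two-sentence derivation preceding the corollary.
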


\subsection{Convexity and geodesics}

We can check that~$\tilde{\m{M}}$ is also convex along other paths, not necessarily satisfying the dHYM equation. First, we define a class of representatives for the complexified K\"ahler class (and a set of potentials), following~\cite{CollinsYau_dHYMgeodesics}.
\begin{definition}
	A complexified K\"ahler form~$\omega^{\bb{C}}\in\alpha^{\bb{C}}$ is an \emph{almost calibrated representative} of~$\alpha^{\bb{C}}$ if it satisfies~$\Re(\mrm{e}^{-\I\hat{\vartheta}}(\omega^{\bb{C}})^n)>0$.
\end{definition}
Fixing some~$\omega^{\bb{C}}_0\in\alpha^{\bb{C}}$, the almost calibrated representatives are parametrized, through the~$\del\delbar$-Lemma, by the space of \emph{almost calibrated potentials}
\begin{equation}
\m{H}=\set*{\varphi\in\m{C}^\infty(X,\bb{C})\tc \Im(\omega^{\bb{C}}_0+\I\del\delbar\varphi) \text{ is K\"ahler, and }\Re\left(\mrm{e}^{-\I\hat{\vartheta}}(\omega_0^{\bb{C}}+\I\del\delbar\varphi)^n\right)>0}.
\end{equation}
The existence of almost calibrated representatives of~$\alpha^{\bb{C}}$ is a necessary condition for the existence of solutions of the dHYM equation: we henceforth assume that~$\m{H}\not=\emptyset$.

Notice also that, if a lift of~$\mrm{e}^{\I\hat{\vartheta}}$ has been fixed (e.g.\ if~$\alpha^{\bb{C}}$ is supercritical), then the space of almost calibrated representatives can be characterized in terms of the Lagrangian phase operator as
\begin{equation}
\set*{\omega^{\bb{C}}\in\alpha^{\bb{C}}\tc\hat{\vartheta}-\frac{\pi}{2}<\Theta(\omega^{\bb{C}})<\hat{\vartheta}+\frac{\pi}{2}}.
\end{equation}
\begin{prop}\label{prop:convex_geod}
Assume that~$\varphi_t\in\m{H}$ is a path of potentials satisfying
\begin{equation}\label{eq:geodesic_coupled}
\begin{dcases}
&\Im(\ddot{\varphi})-\norm{\del\Im(\dot{\varphi})}^2_{\omega_t}=0\\
&\Re\left[\mrm{e}^{-\I\hat{\vartheta}}\left(\ddot{\varphi}\,(\omega^{\bb{C}}_{\varphi_t})^n-n\,\I\del\dot{\varphi}\wedge\delbar\dot{\varphi}\wedge(\omega^{\bb{C}}_{\varphi_t})^{n-1}\right)\right]=0.
\end{dcases}
\end{equation}
Then~$\tilde{\m{M}}$ is convex along~$\varphi_t$.
\end{prop}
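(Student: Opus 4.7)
The plan is to evaluate the second variation formula \eqref{eq:secondvar} along a path in $\m{H}$ satisfying \eqref{eq:geodesic_coupled}, use the two geodesic equations to eliminate the $\ddot\varphi$ contributions, and then recognise the resulting pointwise expression as the quadratic form already treated in the proof of Lemma~\ref{lemma:secondvar}.

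The first equation of \eqref{eq:geodesic_coupled} is precisely $\Im\ddot\varphi - \norm*{\del\Im\dot\varphi}^2_\omega = 0$, so it directly kills the last term of \eqref{eq:secondvar}. For the leading term, since $\gamma = \abs{\gamma}\mrm{e}^{-\I\hat\vartheta}$, the second equation of \eqref{eq:geodesic_coupled} forces the top-form $\mrm{e}^{-\I\hat\vartheta}\bigl(\ddot\varphi(\omega^{\bb{C}})^n - n\I\del\dot\varphi\wedge\delbar\dot\varphi\wedge(\omega^{\bb{C}})^{n-1}\bigr)$ to be pointwise purely imaginary, so that $\Im[\gamma(\cdots)]$ equals $\abs{\gamma}$ times $\Im[\mrm{e}^{-\I\hat\vartheta}(\cdots)]$. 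Writing $\varphi = \varphi_1 + \I\varphi_2$ and expanding the vanishing real part, this same equation also lets one solve algebraically for $\ddot\varphi_1$ in terms of $\ddot\varphi_2$ (which is prescribed by the first geodesic equation) and the first derivatives of $\dot\varphi$; it is here that the almost calibrated hypothesis enters, as the positivity of $\Re[\gamma(\omega^{\bb{C}})^n]$ is precisely what makes the inversion possible.

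To finish, the strategy is to work pointwise in a local unitary frame simultaneously diagonalising $\omega$ and $B$, with $\lambda_a$ the eigenvalues of $\omega^{-1}B$, $r_a=(1+\lambda_a^2)^{1/2}$, $\vartheta_a=\arccot\lambda_a$, $u_a=\del_a\dot\varphi_1$, $v_a=\del_a\dot\varphi_2$, and $\phi=\Theta(\omega^{\bb{C}})-\hat\vartheta$. A direct trigonometric manipulation, hinging on identities like $\tan\phi\cos(\phi-\vartheta_a)-\sin(\phi-\vartheta_a)=\sin\vartheta_a/\cos\phi$, reduces the remaining integrand to
\begin{equation}
\frac{\abs{\gamma}\,r(\omega^{\bb{C}})}{\cos\phi}\sum_{a=1}^n\left(\abs{v_a}^2+\frac{(\abs{u_a}^2-\abs{v_a}^2)\sin\vartheta_a-2\,\Re(u_a\bar v_a)\cos\vartheta_a}{r_a}\right)\omega^n.
\end{equation}
The prefactor is positive by the almost calibrated condition ($\cos\phi>0$), and each summand equals the non-negative quantity $(1+\lambda_a^2)^{-1}\abs{\lambda_a v_a - u_a}^2$ by exactly the identity established in the proof of Lemma~\ref{lemma:secondvar}. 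Combined with the manifestly non-negative remaining term $\norm*{\m{D}\,\Im\dot\varphi}^2_{L^2(\omega)}$ of \eqref{eq:secondvar}, this yields $\partial^2_t\tilde{\m{M}}(\varphi_t)\geq 0$.

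Conceptually the argument is a direct adaptation of Lemma~\ref{lemma:secondvar}: the coupled equations \eqref{eq:geodesic_coupled} play the role of the scalar curvature equation there, eliminating the $\ddot\varphi$ contributions and leaving the same pointwise sum-of-squares quadratic form. The only genuine obstacle is the trigonometric book-keeping needed to see that substituting $\ddot\varphi_1$ cleanly produces the factor $1/\cos\phi$, which is the precise point at which the almost calibrated assumption turns into pointwise positivity.
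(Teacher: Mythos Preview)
Your proposal is correct and follows essentially the same route as the paper: after using the first geodesic equation to reduce \eqref{eq:secondvar} to \eqref{eq:secondvar_geod}, you work in a local frame diagonalising $\omega$ and $B$, use the second geodesic equation to solve for $\ddot\varphi_1$ (dividing by $\cos\phi$, which is where almost calibration enters), and recognise the resulting integrand as $\frac{\abs{\gamma}r(\omega^{\bb{C}})}{\cos\phi}\sum_a\frac{\abs{u_a-\lambda_a v_a}^2}{1+\lambda_a^2}$ via the identity from Lemma~\ref{lemma:secondvar}. The only cosmetic differences are notation ($\phi$ for the paper's $\eta$) and that you package the trigonometric simplification via the identity $\tan\phi\cos(\phi-\vartheta_a)-\sin(\phi-\vartheta_a)=\sin\vartheta_a/\cos\phi$, whereas the paper writes out the full expansion.
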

\begin{proof}
From equation~\eqref{eq:secondvar}, if~$\Im(\ddot{\varphi})-\norm{\del\Im(\dot{\varphi})}^2_{\omega_t}=0$ we have
\begin{equation}\label{eq:secondvar_geod}
\partial^2_t\m{M}(\varphi_t)=\norm*{\m{D}\,\Im(\dot{\varphi})}^2_{L^2(\omega_{\Im(\varphi)})}+\int_X\,\Im\left[\gamma\left(\ddot{\varphi}\,(\omega^{\bb{C}}_\varphi)^n-n\,\I\del\dot{\varphi}\wedge\delbar\dot{\varphi}\wedge(\omega^{\bb{C}}_\varphi)^{n-1}\right)\right].
\end{equation}
We check that, under our hypothesis, the second term on the right-hand side of~\eqref{eq:secondvar_geod} is non-negative. To do so, we use the same local coframe and the same notation as in the proof of Lemma~\ref{lemma:secondvar}. In addition to that, let also~$\eta=\Theta-\hat{\vartheta}$. Then, we have
\begin{equation}\label{eq:complex_forms}
\begin{split}
&\gamma\,\ddot{\varphi}\,(\omega^{\bb{C}}_\varphi)^n=\omega^n\abs{\gamma}r(\omega^{\bb{C}})\mrm{e}^{\I\eta}\ddot{\varphi}\\
&\gamma\,n\,\I\del\dot{\varphi}\wedge\delbar\dot{\varphi}\wedge(\omega^{\bb{C}}_\varphi)^{n-1}=\omega^n\abs{\gamma}r(\omega^{\bb{C}})\mrm{e}^{\I\eta}\sum_{a=1}^n(\del\dot{\varphi})_a(\delbar\dot{\varphi})_ar_a^{-1}\mrm{e}^{-\I\vartheta_a}.
\end{split}
\end{equation}
Let~$\varphi=u+\I v$ for~$u,v\in\m{C}^\infty(X,\bb{R})$. Then we can expand~\eqref{eq:complex_forms} into real and imaginary parts as
\begin{equation}
\begin{split}
\Re\left(\gamma\,\ddot{\varphi}\,(\omega^{\bb{C}}_\varphi)^n\right)=\abs{\gamma}r(\omega^{\bb{C}})\left(\ddot{u}\cos\eta-\ddot{v}\sin\eta\right)\omega^n\\
\Im\left(\gamma\,\ddot{\varphi}\,(\omega^{\bb{C}}_\varphi)^n\right)=\abs{\gamma}r(\omega^{\bb{C}})\left(\ddot{u}\sin\eta+\ddot{v}\cos\eta\right)\omega^n
\end{split}
\end{equation}
\begin{equation}
\begin{split}
\Re\big(\gamma\,n\,\I\del\dot{\varphi}\wedge\delbar\dot{\varphi}\wedge&(\omega^{\bb{C}}_\varphi)^{n-1}\big)=\\
=&\abs{\gamma}\omega^nr(\omega^{\bb{C}})\sum_{a=1}^n\frac{\abs{(\del\dot{u})_a}^2-\abs{(\del\dot{v})_a}^2}{1+\lambda_a^2}\left(\lambda_a\cos\eta+\sin\eta\right)\\
&-\abs{\gamma}\omega^nr(\omega^{\bb{C}})\sum_{a=1}^n\frac{(\del\dot{u})_a(\delbar\dot{v})_a+(\del\dot{v})_a(\delbar\dot{u})_a}{1+\lambda_a^2}\left(\lambda_a\sin\eta-\cos\eta\right)
\end{split}
\end{equation}
\begin{equation}
\begin{split}
\Im\big(\gamma\,n\,\I\del\dot{\varphi}\wedge\delbar\dot{\varphi}\wedge&(\omega^{\bb{C}}_\varphi)^{n-1}\big)=\\
=&\abs{\gamma}\omega^nr(\omega^{\bb{C}})\sum_{a=1}^n\frac{\abs{(\del\dot{u})_a}^2-\abs{(\del\dot{v})_a}^2}{1+\lambda_a^2}\left(\lambda_a\sin\eta-\cos\eta\right)\\
&+\abs{\gamma}\omega^nr(\omega^{\bb{C}})\sum_{a=1}^n\frac{(\del\dot{u}_1)_a(\delbar\dot{v}_2)_a+(\del\dot{\varphi}_2)_a(\delbar\dot{\varphi}_1)_a}{1+\lambda_a^2}\left(\lambda_a\cos\eta+\sin\eta\right).
\end{split}
\end{equation}
The second condition in~\eqref{eq:geodesic_coupled} then becomes
\begin{equation}
\begin{split}
\ddot{u}\cos\eta-\ddot{v}\sin\eta=&\sum_{a=1}^n\frac{\abs{(\del\dot{u})_a}^2-\abs{(\del\dot{v})_a}^2}{1+\lambda_a^2}\left(\lambda_a\cos\eta+\sin\eta\right)\\
&-\sum_{a=1}^n\frac{(\del\dot{u})_a(\delbar\dot{v})_a+(\del\dot{v})_a(\delbar\dot{u})_a}{1+\lambda_a^2}\left(\lambda_a\sin\eta-\cos\eta\right)
\end{split}
\end{equation}
so using that~$v$ is a K\"ahler geodesic, we see that~$u$ must satisfy
\begin{equation}\label{eq:ddot_Re_phi}
\begin{split}
\ddot{u}\cos\eta=&\sum\abs{(\del\dot{v})_a}^2\sin\eta+\sum_{a=1}^n\frac{\abs{(\del\dot{u})_a}^2-\abs{(\del\dot{v})_a}^2}{1+\lambda_a^2}\left(\lambda_a\cos\eta+\sin\eta\right)\\
&-\sum_{a=1}^n\frac{(\del\dot{u})_a(\delbar\dot{v})_a+(\del\dot{v})_a(\delbar\dot{u})_a}{1+\lambda_a^2}\left(\lambda_a\sin\eta-\cos\eta\right).
\end{split}
\end{equation}
On the other hand, the second integrand in~\eqref{eq:secondvar_geod} (up to factoring out~$\abs{\gamma}r(\omega^{\bb{C}})\omega^n$) is
\begin{equation}\label{eq:integrand}
\begin{split}
\Im\big[\mrm{e}^{-\I\hat{\vartheta}}&\left(\ddot{\varphi}\,(\omega^{\bb{C}}_\varphi)^n-n\,\I\del\dot{\varphi}\wedge\delbar\dot{\varphi}\wedge(\omega^{\bb{C}}_\varphi)^{n-1}\right)\big]/\omega^nr(\omega^{\bb{C}})=\\
=&\ddot{u}\sin\eta+\sum\abs{(\del\dot{v})_a}^2\cos\eta\\
&-\sum_{a=1}^n\frac{\abs{(\del\dot{u})_a}^2-\abs{(\del\dot{v})_a}^2}{1+\lambda_a^2}\left(\lambda_a\sin\eta-\cos\eta\right)\\
&-\sum_{a=1}^n\frac{(\del\dot{u})_a(\delbar\dot{v})_a+(\del\dot{v})_a(\delbar\dot{u})_a}{1+\lambda_a^2}\left(\lambda_a\cos\eta+\sin\eta\right).
\end{split}
\end{equation}
Plug in~\eqref{eq:integrand} the expression for~$\ddot{v}$ obtained from~\eqref{eq:ddot_Re_phi} to obtain
\begin{equation}\label{eq:imaginpart_geod}
\begin{split}
\Im\big[\gamma\left(\ddot{\varphi}\,(\omega^{\bb{C}}_\varphi)^n-n\,\I\del\dot{\varphi}\wedge\delbar\dot{\varphi}\wedge(\omega^{\bb{C}}_\varphi)^{n-1}\right)\big]/\omega^nr(\omega^{\bb{C}})=
\phantom{\left(+\frac{\sin^2\eta}{\cos\eta}\right)}&\\
=\frac{\abs{\gamma}}{\cos\eta}\sum_{a=1}^{n}\Bigg[\abs{(\del\dot{v})_a}^2\sin^2\eta
+\frac{\abs{(\del\dot{u})_a}^2-\abs{(\del\dot{v})_a}^2}{1+\lambda_a^2}\left(\lambda_a\cos\eta\sin\eta+\sin^2\eta\right)&\\
-\frac{(\del\dot{u})_a(\delbar\dot{v})_a+(\del\dot{v})_a(\delbar\dot{u})_a}{1+\lambda_a^2}\left(\lambda_a\sin^2\eta-\cos\eta\sin\eta\right)&\\
+\abs{(\del\dot{v})_a}^2\cos^2\eta-\frac{\abs{(\del\dot{u})_a}^2-\abs{(\del\dot{v})_a}^2}{1+\lambda_a^2}\left(\lambda_a\cos\eta\sin\eta-\cos^2\eta\right)\\
-\frac{(\del\dot{u})_a(\delbar\dot{v})_a+(\del\dot{v})_a(\delbar\dot{u})_a}{1+\lambda_a^2}\left(\lambda_a\cos^2\eta+\cos\eta\sin\eta\right)&\Bigg].
\end{split}
\end{equation}
A few simplifications allow to rewrite the summand, for a fixed index~$a$, as
\begin{equation}\label{eq:realpart_summand}
\abs{(\del\dot{v})_a}^2+\frac{\abs{(\del\dot{u})_a}^2-\abs{(\del\dot{v})_a}^2}{1+\lambda_a^2}-\lambda_a\frac{(\del\dot{u})_a\,(\delbar\dot{v})_a+(\del\dot{v})_a\,(\delbar\dot{u})_a}{1+\lambda_a^2}=\frac{\abs{(\del\dot{u})_a-\lambda_a\,(\del\dot{v})_a}^2}{1+\lambda_a^2}.
\end{equation}
As~$\varphi\in\m{H}$ if and only if~$\cos\eta>0$, this concludes the proof.
\end{proof}

\subsubsection{The complexified geodesic equation}\label{sec:geod_equations}

Even though we are calling~\eqref{eq:geodesic_coupled} the ``geodesic equations'' on the space~$\m{H}$ of almost calibrated representatives, they do not describe the geodesics for a metric on this infinite-dimensional space. A similar phenomenon was first noticed in~\cite{FernandezPradaConsul_coupledKahlerHYM}, where a similar set of equations was proposed to study the \emph{coupled K\"ahler-Yang-Mills system}. Equation~\eqref{eq:scalar_dHYM} shares formally many properties with this coupled system, and in fact there is a limiting regime in which~\eqref{eq:scalar_dHYM} reduces to the equations considered in~\cite{FernandezPradaConsul_coupledKahlerHYM} for a line bundle (see~\cite{SchlitzerStoppa} for more details).

Following the argument in~\cite[Proposition~$3.17$]{FernandezPradaConsul_coupledKahlerHYM}, it can be seen that, rather than being the geodesic equation for a Levi-Civita connection on~$\m{H}$,~\eqref{eq:geodesic_coupled} describes the geodesics for a torsion-free connection on~$\m{H}$, seen as a bundle over the K\"ahler class~$\alpha$ whose fibres are the space of almost-calibrated representatives of~\cite{CollinsYau_dHYMgeodesics}.

We now describe a set of solutions to~\eqref{eq:geodesic_coupled} coming from elements of~$\mrm{Aut}(X,\alpha^{\bb{C}})$, which we call \emph{trivial geodesics}. Fix a reference point~$\omega^{\bb{C}}_0=B_0+\I\omega_0$ in~$\alpha^{\bb{C}}$ and assume that~$(u,v)$ are real functions on~$X$ that solve~\eqref{eq:holpotential_omega} and~\eqref{eq:holpotential_beta} with respect to~$\omega_0$ and~$B_0$. Note in particular that~$\nabla_0v$ is a real-holomorphic vector field on~$X$. We let~$f_t$ be the flow of~$\frac{1}{2}\nabla_0v$, and define~$\psi_t$ by~$\psi_t=\int_0^tf_s^*(u+\I v)\mrm{d}s$. Then, direct computation shows
\begin{equation}
\omega_0^{\bb{C}}+\I\del\delbar\psi_t=f_t^*\omega_0^{\bb{C}},
\end{equation}
and it follows from~\eqref{eq:holpotential_beta} that
\begin{equation}\label{eq:trivial_geod}
\nabla^{1,0}_{t}\Im(\dot{\psi}_t)\lrcorner B_t=\I\delbar\Re(\dot{\psi}_t).
\end{equation}
We claim that~$\psi_t$ solves the geodesic system~\eqref{eq:geodesic_coupled}.
\begin{proof}
We let~$\omega_t\coloneqq f_t^*\omega_0=\omega_0+\I\del\delbar\Re(\psi_t)$. Then, from the definition of~$\psi_t$ we get
\begin{equation}\label{eq:hol_geod_dd}
\begin{gathered}
\ddot{\psi}=\partial_t(f_t^*(u+\I v))=\frac{1}{2}f_t^*(\m{L}_{\nabla_0\varphi_2}(u+\I v))=\frac{1}{2}\left(\nabla_t\Im(\dot{\psi})\right)(\dot{\psi})=\\
=\frac{1}{2}\left(
\left\langle\mrm{d}\Im(\dot{\psi}),\mrm{d}\Re(\dot{\psi})\right\rangle_t
+\I\left\langle\mrm{d}\Im(\dot{\psi}),\mrm{d}\Im(\dot{\psi})\right\rangle_t
\right)
\end{gathered}
\end{equation}
from which it is clear that the first equation in~\eqref{eq:geodesic_coupled} is satisfied. To show that also the second equation holds, we fix a coframe~$\zeta^1,\dots,\zeta^n$ adapted to the complexified K\"ahler form~$\omega^{\bb{C}}_t$ as in the proof of Lemma~\ref{lemma:secondvar}, and use the same notation. We also denote by~$u_1,\dots,u_n$ and~$v_1,\dots,v_n$ the components of~$\del\Re(\dot{\psi})$ and~$\del\Im(\dot{\psi})$, respectively.

Taking the real part of~\eqref{eq:hol_geod_dd} then gives us
\begin{equation}\label{eq:hol_geod_dd_imaginary}
\Re(\ddot{\psi})=\frac{1}{2}\sum_{a=1}^nu_a\bar{v}_a+v_a\bar{u}_a.
\end{equation}
Similarly, we can rephrase~$\nabla^{1,0}_{t}\Im(\dot{\psi}_t)\lrcorner B_t=\I\delbar\Re(\dot{\psi}_t)$ (after conjugation) as
\begin{equation}\label{eq:kernel_coords}
\lambda_av_a=u_a.
\end{equation}
Putting together~\eqref{eq:ddot_Re_phi} and~\eqref{eq:hol_geod_dd_imaginary}, we see that the second equation in~\eqref{eq:geodesic_coupled} is equivalent to
\begin{equation}
\begin{split}
&\frac{1}{2}\sum(u_a\bar{v}_a+v_a\bar{u}_a)\cos\eta+\sum\frac{u_a\bar{v}_a+v_a\bar{u}_a}{1+\lambda_a^2}\left(\lambda_a\sin\eta-\cos\eta\right)=\\
&=\sum\abs{v_a}^2\sin\eta+\sum\frac{\abs{u_a}^2-\abs{v_a}^2}{1+\lambda_a^2}\left(\lambda_a\cos\eta+\sin\eta\right).
\end{split}
\end{equation}
Taking into account~\eqref{eq:kernel_coords}, this becomes
\begin{equation}
\begin{split}
&\cos\eta\sum\abs{v_a}^2\left(\lambda_a
-\frac{2\lambda_a}{1+\lambda_a^2}
-\frac{\lambda_a^2-1}{1+\lambda_a^2}\lambda_a\right)=\\
&=\sin\eta\sum\abs{v_a}^2\left(1-\frac{2\lambda_a}{1+\lambda_a^2}\lambda_a
+\frac{\lambda_a^2-1}{1+\lambda_a^2}\right)
\end{split}
\end{equation}
and both terms vanish identically.
\end{proof}
\begin{rmk}\label{rmk:trivial_geod}
Equation~\eqref{eq:kernel_coords}, together with~\eqref{eq:realpart_summand}, shows that~$\tilde{\m{M}}$ is affine along trivial geodesics in~$\alpha^{\bb{C}}$. Notice also that~\eqref{eq:trivial_geod} implies that~$\langle\delbar\Im(\dot{\psi}),\del\Re(\dot{\psi})\rangle_{\omega_t}$ coincides with~$\langle\delbar\Re(\dot{\psi}),\del\Re(\dot{\psi})\rangle_{B_t}$, at least if~$B_t$ is a positive form. In this case a trivial geodesic is then given by a parallel pair of K\"ahler geodesics in two different classes.
\end{rmk}
The equations~\eqref{eq:geodesic_coupled} can be rewritten as a system of degenerate-elliptic equations on a manifold with boundary. This is well-known for the first equation in~\eqref{eq:geodesic_coupled}, which is just the geodesic equation in the space of K\"ahler potentials, see~\cite{Donaldson_SymmKahlerHam}. A special case of the second equation in~\eqref{eq:geodesic_coupled}, obtained by setting~$\Im(\dot{\varphi})=0$, has been studied by Collins and Yau in~\cite{CollinsYau_dHYMgeodesics}, where they also showed that their equation can be written as a degenerate dHYM equation. Following their argument, consider the annulus~$A=\set*{z\in\bb{C}\tc\mrm{e}^{-1}<\abs{z}<1}$ and the product~$A\times X$. This is a complex manifold, and we denote by~$D$ and~$\bar{D}$ its Dolbeault operators.
\begin{prop}\label{prop:geodesic_disk}
Fix a complexified K\"ahler form~$\omega_0^{\bb{C}}=B_0+\I\omega_0$ on~$X$. A solution of~\eqref{eq:geodesic_coupled} is equivalent to an~$\bb{S}^1$-invariant function~$\Phi\in\m{C}^\infty(A\times X,\bb{C})$ such that
\begin{equation}\label{eq:geod_coupled_disk}
\begin{dcases}
\left(\pi^*\omega_0+\I D\bar{D}\Im(\Phi)\right)^{n+1}=0\\
\Im\left[\mrm{e}^{-\I\left(\frac{\pi}{2}+\hat{\vartheta}\right)}\left(\pi^*\omega^{\bb{C}}_0+\I D\bar{D}\Phi\right)^{n+1}\right]=0.
\end{dcases}
\end{equation}
Moreover, if~$\Phi$ satisfies~\eqref{eq:geod_coupled_disk} and the corresponding geodesic in~$\alpha^{\bb{C}}$ is nontrivial, the geodesic belongs to~$\m{H}$ if and only if
\begin{equation}
\Re\left[\mrm{e}^{-\I\left(\frac{\pi}{2}+\hat{\vartheta}\right)}\left(\pi^*\omega^{\bb{C}}_0+\I D\bar{D}\Phi\right)^{n+1}\right]>0.
\end{equation}
\end{prop}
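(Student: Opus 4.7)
The plan is to express both equations in \eqref{eq:geodesic_coupled} as the vanishing of top-degree forms on $A\times X$ by explicitly expanding $(\pi^*\omega_0+\I D\bar D\Im(\Phi))^{n+1}$ and $(\pi^*\omega_0^{\bb{C}}+\I D\bar D\Phi)^{n+1}$, in the spirit of the argument of Donaldson and Semmes for the K\"ahler geodesic equation and of Collins--Yau's argument~\cite{CollinsYau_dHYMgeodesics} for the dHYM geodesic. Setting $w:=-\log|z|\in(0,1)$, the $\bb{S}^1$-invariance of $\Phi$ gives $\Phi(z,x)=\varphi_w(x)$; since $\del_z w=-1/(2z)$ and $\del_{\bar z}w=-1/(2\bar z)$, a direct computation yields
\begin{equation*}
\I D\bar D\Phi=\I\,\del\bar\del\varphi_w+\frac{\I\,\ddot\varphi_w}{4|z|^2}dz\wedge d\bar z-\frac{\I}{2\bar z}\del\dot\varphi_w\wedge d\bar z-\frac{\I}{2z}dz\wedge\bar\del\dot\varphi_w,
\end{equation*}
where dots denote $\del/\del w$ and $\del,\bar\del$ are the differentials on $X$.

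In the expansion of $(\pi^*\omega_0^{\bb{C}}+\I D\bar D\Phi)^{n+1}$, every vertical term with repeated $dz$ or $d\bar z$ vanishes, so only two contributions survive: one factor of the pure $\ddot\varphi_w$-term wedged with $(\pi^*\omega_{\varphi_w}^{\bb{C}})^n$ (with multiplicity $n+1$), and the product of the two mixed terms wedged with $(\pi^*\omega_{\varphi_w}^{\bb{C}})^{n-1}$ (with multiplicity $n(n+1)$). A direct calculation gives
\begin{equation*}
(\pi^*\omega_0^{\bb{C}}+\I D\bar D\Phi)^{n+1}=\frac{(n+1)\,\I\,dz\wedge d\bar z}{4|z|^2}\wedge\bigl[\ddot\varphi_w(\omega_{\varphi_w}^{\bb{C}})^n-n\I\,\del\dot\varphi_w\wedge\bar\del\dot\varphi_w\wedge(\omega_{\varphi_w}^{\bb{C}})^{n-1}\bigr],
\end{equation*}
and the same computation applied to $\pi^*\omega_0+\I D\bar D v$ with $v:=\Im\varphi_w$, combined with the identity $n\I\del\dot v\wedge\bar\del\dot v\wedge\omega_v^{n-1}=\|\del\dot v\|^2_{\omega_v}\omega_v^n$, shows that the first equation of \eqref{eq:geod_coupled_disk} is the K\"ahler geodesic equation for $v$ on $X$.

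For the second equation, write $e^{-\I(\pi/2+\hat\vartheta)}=-\I\,e^{-\I\hat\vartheta}$ and note that $\I\,dz\wedge d\bar z/|z|^2$ is a real positive $(1,1)$-form on $A$. Denoting $G:=\ddot\varphi_w(\omega_{\varphi_w}^{\bb{C}})^n-n\I\del\dot\varphi_w\wedge\bar\del\dot\varphi_w\wedge(\omega_{\varphi_w}^{\bb{C}})^{n-1}$, multiplication by $-\I$ swaps real and imaginary parts, so
\begin{equation*}
\Im\bigl[e^{-\I(\pi/2+\hat\vartheta)}(\pi^*\omega_0^{\bb{C}}+\I D\bar D\Phi)^{n+1}\bigr]=\frac{n+1}{4|z|^2}\,\I\,dz\wedge d\bar z\wedge\Re\bigl[e^{-\I\hat\vartheta}G\bigr],
\end{equation*}
with the analogous identity for $\Re$ giving $\Im[e^{-\I\hat\vartheta}G]$ on the right-hand side; this identifies the second disk equation with the second equation of \eqref{eq:geodesic_coupled}. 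When both geodesic equations hold, the substitution carried out in the proof of Proposition~\ref{prop:convex_geod} (the passage from~\eqref{eq:imaginpart_geod} to~\eqref{eq:realpart_summand}) gives
\begin{equation*}
\Im\bigl[e^{-\I\hat\vartheta}G\bigr]=\frac{r(\omega_{\varphi_w}^{\bb{C}})}{\cos\eta}\sum_a\frac{|(\del\dot u)_a-\lambda_a(\del\dot v)_a|^2}{1+\lambda_a^2}\,\omega^n
\end{equation*}
in the adapted coframe, with $u:=\Re\varphi_w$; non-triviality forces the sum to be non-zero somewhere, so positivity of the left-hand side is equivalent to $\cos\eta>0$, i.e.~$\omega_{\varphi_w}^{\bb{C}}\in\m{H}$ for every $w$.

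The only real obstacle is careful bookkeeping of the various signs and factors of $\I$: between $dz\wedge d\bar z$ and $dw\wedge d\theta$, between the ``disk'' phase $e^{-\I(\pi/2+\hat\vartheta)}$ and the ``path'' phase $e^{-\I\hat\vartheta}$, and in separating real from imaginary parts throughout. Beyond this, the computation is formally parallel to the argument in~\cite{CollinsYau_dHYMgeodesics} specialized to the case $\Im(\dot\varphi)=0$, and the final positivity claim is essentially a rereading of the local identity~\eqref{eq:realpart_summand} established in the course of Proposition~\ref{prop:convex_geod}.
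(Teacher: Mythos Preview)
Your argument is correct and follows the same route as the paper's: both expand $(\pi^*\chi+\I D\bar D\Phi)^{n+1}$ via the Collins--Yau identity (the paper's~\eqref{eq:annulus_computations}), use the phase shift $\mrm{e}^{-\I(\pi/2+\hat\vartheta)}=-\I\,\mrm{e}^{-\I\hat\vartheta}$ to match the disk system with~\eqref{eq:geodesic_coupled}, and invoke the local computation~\eqref{eq:realpart_summand} from Proposition~\ref{prop:convex_geod} for the positivity claim. One harmless slip: since $\Im(-\I w)=-\Re(w)$, your displayed identity for $\Im\bigl[\mrm{e}^{-\I(\pi/2+\hat\vartheta)}(\cdots)^{n+1}\bigr]$ is off by a sign, but this does not affect the vanishing equation, and the companion $\Re$-identity you actually use for the positivity statement is correct.
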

\begin{proof}
The equivalence is given by sending a path of potentials~$\set{\varphi_t}\subset\m{C}^\infty(X,\bb{C})$ to the~$\bb{S}^1$-invariant function~$\Phi(z,x)=\varphi_{-\log\abs{z}}(x)$. The proof is a matter of direct computation, essentially already carried out in~\cite{CollinsYau_dHYMgeodesics}, see the proof of Lemma~$2.8$ \textit{ibid}. It is shown that, for any~$\chi\in\m{A}^{1,1}(X,\bb{C})$
\begin{equation}\label{eq:annulus_computations}
\left(\pi^*\chi+\I D\bar{D}\Phi\right)^{n+1}=\frac{(n+1)\I\mrm{d}z\wedge\mrm{d}\bar{z}}{\abs{z}^2}\left(\ddot{\varphi}\,\chi_\varphi^n-n\,\I\del\dot{\varphi}\wedge\delbar\dot{\varphi}\wedge\chi_\varphi^{n-1}\right)
\end{equation}
which readily implies the equivalence between~\eqref{eq:geodesic_coupled} and
\begin{equation}
\begin{dcases}
&\left(\pi^*\omega_0+\I D\bar{D}\Im(\Phi)\right)^{n+1}=0\\
&\Re\left[\mrm{e}^{-\I\hat{\vartheta}}\left(\pi^*\omega_0^{\bb{C}}+\I D\bar{D}\Phi\right)^{n+1}\right]=0.
\end{dcases}
\end{equation}
To obtain~\eqref{eq:geod_coupled_disk}, we just have to notice that
\begin{equation}
\Im\left[\mrm{e}^{-\I\left(\frac{\pi}{2}+\hat{\vartheta}\right)}\left(\pi^*\omega^{\bb{C}}_0+\I D\bar{D}\Phi\right)^{n+1}\right]=-\Re\left[\mrm{e}^{-\I\hat{\vartheta}}\left(\pi^*\omega_0^{\bb{C}}+\I D\bar{D}\Phi\right)^{n+1}\right].
\end{equation}
The second claim now follows from the proof of Proposition~\ref{prop:convex_geod}: the calculations in that proof shows that, along a geodesic,
\begin{equation}
\begin{split}
\Im\big[\mrm{e}^{-\I\hat{\vartheta}}\left(\ddot{\varphi}\,(\omega^{\bb{C}}_\varphi)^n-n\,\I\del\dot{\varphi}\wedge\delbar\dot{\varphi}\wedge(\omega^{\bb{C}}_\varphi)^{n-1}\right)\big]>0
\end{split}
\end{equation}
if and only if~$\cos(\Theta-\vartheta)>0$, i.e.\ the path of potentials belongs to~$\m{H}$. By~\eqref{eq:annulus_computations}, this is in turn equivalent to 
\begin{equation}
\begin{split}
\Im\big[\mrm{e}^{-\I\hat{\vartheta}}\left(\pi^*\omega_0^{\bb{C}}+\I D\bar{D}\Phi\right)^{n+1}\big]>0
\end{split}
\end{equation}
and~$\Re\big[\mrm{e}^{-\I\left(\frac{\pi}{2}+\hat{\vartheta}\right)}\left(\pi^*\omega_0^{\bb{C}}+\I D\bar{D}\Phi\right)^{n+1}\big]=\Im\big[\mrm{e}^{-\I\hat{\vartheta}}\left(\pi^*\omega_0^{\bb{C}}+\I D\bar{D}\Phi\right)^{n+1}\big]$.
\end{proof}
In general, there might not be smooth solutions to~\eqref{eq:geod_coupled_disk}, even for smooth boundary data, since the equation is \emph{degenerate} elliptic. It is well-known however that there always are~$\m{C}^{1,1}$-solutions of the degenerate Monge-Amp\`ere equation with smooth boundary data, see~\cite{Chen_SpaceofKahlerMetrics, TosattiWeinkove_C11regularity}. The same result holds for the degenerate dHYM equation, for a fixed smooth K\"ahler metric in~$\alpha$ and assuming that~$(X,\alpha^{\bb{C}})$ is in the hypercritical range: between any two smooth functions in~$\m{H}$ there is a (unique) path~$\varphi_t\in\m{C}^{1,1}$ that satisfies the dHYM geodesic equation, see~\cite{CollinsYau_dHYMgeodesics, CollinsChuLee_calibrated11forms}. Moreover, this path lies in (a suitable extension of)~$\m{H}$. 

It is natural to conjecture that this regularity should also hold for solutions of the coupled system~\eqref{eq:geod_coupled_disk}, which would then imply the following:
\begin{conj}\label{conj:geodesics_existence}
If~$(X,\alpha^{\bb{C}})$ is hypercritical then for every~$\varphi_0,\varphi_1\in\m{H}$ there is a unique path~$\varphi_t\in\m{H}$ that solves~\eqref{eq:geodesic_coupled}, joins~$\varphi_0$,~$\varphi_1$, and such that~$\varphi_t\in\m{C}^{1,1}(X,\bb{C})$.
\end{conj}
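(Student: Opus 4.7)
The plan is to attack the conjecture through the reformulation on the annulus $A\times X$ provided by Proposition~\ref{prop:geodesic_disk}, combining in a two-step procedure the available regularity theories for the degenerate Monge--Amp\`ere equation and the degenerate dHYM equation. The first equation in~\eqref{eq:geod_coupled_disk} involves only $v:=\Im(\Phi)$ and is the standard geodesic equation in the space of K\"ahler potentials on~$(X,\alpha)$. For smooth boundary data $\Im(\varphi_0),\Im(\varphi_1)$, it admits a unique $\m{C}^{1,1}$-solution $v$ with $\pi^*\omega_0+\I D\bar D v\geq 0$, by the results of~\cite{Chen2000, TosattiWeinkove_C11regularity}. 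This determines the ``K\"ahler part'' of the geodesic and hence fixes, along the path, the K\"ahler form $\omega_t=\omega_0+\I\del\delbar v_t$ in $\alpha$.

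With $v$ fixed, writing $\Phi=u+\I v$ and $\pi^*\omega^{\bb{C}}_0+\I D\bar D\Phi=(\pi^*B_0-D\bar D v)+\I(\pi^*\omega_0+D\bar D u)$, the second equation in~\eqref{eq:geod_coupled_disk} becomes a degenerate dHYM equation on $A\times X$ for $u$, with background $B$-field $\pi^*B_0-D\bar D v$ and a varying K\"ahler form in the class $\pi^*\alpha$. In the hypercritical regime and for a \emph{smooth} background $B$-field, this is essentially the content of~\cite{CollinsYau_dHYMgeodesics, CollinsChuLee_calibrated11forms}, which gives a unique $\m{C}^{1,1}$-solution between any two almost calibrated potentials. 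One should first verify that the hypercritical phase condition on $(X,\alpha^{\bb{C}})$ lifts to the corresponding condition on $(A\times X,\pi^*\alpha^{\bb{C}})$, which is morally the point of the discussion following the statement of the conjecture: the $(n+1)$-dimensional phase differs from the $n$-dimensional one by $\pi/2$, so hypercriticality on $X$ corresponds to supercriticality on $A\times X$, which is the range in which Collins--Chu--Lee type estimates are available.

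The next ingredient is a regularization scheme to deal with the fact that $D\bar D v$ is only in $L^\infty$. The natural approach is to mollify $v$ at a small scale $\varepsilon>0$ to obtain a smooth $B$-field, solve the (still degenerate) dHYM equation for $u_\varepsilon$, and pass to the limit $\varepsilon\to 0$. The crucial step is then a $\m{C}^{1,1}$ a~priori estimate for $u_\varepsilon$ depending only on the boundary data and on $\norm{D\bar D v}_{L^\infty}$. Here the algebraic positivity identity~\eqref{eq:realpart_summand} extracted in the proof of Proposition~\ref{prop:convex_geod} is the key analytic input: it tells us that the full second variation of $\tilde{\m{M}}$ is a sum of squares on almost calibrated paths, which should translate into a uniform ellipticity for the coupled linearisation of~\eqref{eq:geod_coupled_disk}. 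Uniqueness of the $\m{C}^{1,1}$-geodesic would then follow from convexity of $\tilde{\m{M}}$ and a comparison argument: the MA part determines $v$ uniquely by the standard maximum principle, and for fixed $v$ the difference of two solutions $u$ of the dHYM equation is annihilated by a degenerate-elliptic operator that vanishes on $\del A\times X$.

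The main obstacle, as outlined above, is the low regularity of the background $B$-field $\pi^*B_0-D\bar D v$ in the dHYM step: the existing regularity theory deals with smooth, positively twisted backgrounds, and extending it to an $L^\infty$ background coming from a $\m{C}^{1,1}$-solution of the MA equation appears genuinely delicate. A cleaner route would probably be a truly coupled continuity method, perturbing the right-hand side of both equations in~\eqref{eq:geod_coupled_disk} by $\varepsilon$, solving the resulting non-degenerate system, and then running a joint $\m{C}^{1,1}$ estimate on $(u_\varepsilon,v_\varepsilon)$ based on~\eqref{eq:realpart_summand}. The fact that the sum-of-squares structure in \eqref{eq:realpart_summand} is most naturally exploited in the hypercritical range, where $\cos\eta>0$ is controlled away from zero, is consistent with the scope of the conjecture and suggests that this is indeed the correct regime for such estimates.
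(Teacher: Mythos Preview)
This statement is presented in the paper as a \emph{conjecture}; the paper gives no proof, only motivation (the separate regularity results for the Monge--Amp\`ere and dHYM geodesic equations, and the $\pi/2$ phase shift when passing from $X$ to $A\times X$). There is therefore no proof in the paper to compare your proposal against, and your text is, as you yourself make clear, a research outline rather than a proof.

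That said, your two-step decoupling contains a computational slip that hides the real difficulty. With $\omega_0^{\bb C}=B_0+\I\omega_0$ and $\Phi=u+\I v$ one has
\[
\pi^*\omega_0^{\bb C}+\I D\bar D\Phi=\bigl(\pi^*B_0+\I D\bar D\,u\bigr)+\I\bigl(\pi^*\omega_0+\I D\bar D\,v\bigr),
\]
so $u=\Re(\Phi)$ is the B-field potential and $v=\Im(\Phi)$ enters the \emph{K\"ahler} form, not the B-field as you wrote. After solving the Monge--Amp\`ere equation for $v$, the second equation in~\eqref{eq:geod_coupled_disk} is a dHYM-type equation for $u$ with background K\"ahler form $\pi^*\omega_0+\I D\bar D v$, which is not merely of low regularity but is \emph{degenerate} in the annulus direction by the very equation $v$ solves. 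The Collins--Yau and Collins--Chu--Lee estimates assume a fixed smooth \emph{positive} K\"ahler form; mollifying $v$ does not help, since even a smooth solution of the $\varepsilon$-perturbed Monge--Amp\`ere equation gives a background form whose smallest eigenvalue is only of order $\varepsilon$, and the dHYM $\m C^{1,1}$-estimates blow up as the background degenerates. This is a qualitatively different obstacle from the ``$L^\infty$ background B-field'' issue you describe, and it is exactly why a genuinely coupled continuity method---your second suggestion---is likely necessary, and why the statement remains a conjecture.
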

Notice that if the dHYM equation for~$(X,\alpha^{\bb{C}})$ has hypercritical phase, then the corresponding degenerate dHYM equation on~$\m{X}$ has (strictly) supercritical phase and vice-versa. This suggests that it might not be possible to establish the regularity of solutions without assuming the hypercritical phase condition on~$(X,\alpha^{\bb{C}})$.

\begin{rmk}
	Note that if $\omega^{\bb{C}}$ is a complexified K\"ahler form with \emph{hypercritical} phase, then both $\Re(\omega^{\bb{C}})$ and $\Im(\omega^{\bb{C}})$ are K\"ahler forms, cf.\ \cite[Proof of Proposition $5.1$]{JacobYau_special_Lag}.
\end{rmk}

We conclude this section by observing that, in the hypercritical case, the complexified K-energy is also convex along paths of functions that satisfy a more familiar condition than~\eqref{eq:geodesic_coupled}. See also~\cite{KingLeungLee_MomentCoupled} for a similar observation about the K\"ahler-Yang-Mills equations of~\cite{FernandezPradaConsul_coupledKahlerHYM}.
\begin{lemma}\label{lem:Kahler_geod_convex}
Assume that~$(X,\alpha^{\bb{C}})$ has hypercritical phase. Then, the complexified K-energy is convex along paths~$\varphi_t\in\m{H}$ that satisfy
\begin{equation}\label{eq:Kahler_geod_system}
\begin{cases}
\Im(\ddot{\varphi})-\norm{\del\Im(\dot{\varphi})}^2_{\omega_t}=0\\
\Re(\ddot{\varphi})-\norm{\del\Re(\dot{\varphi})}^2_{B_t}=0\\
\end{cases}
\end{equation}
for~$\omega_t=\omega+\I\del\delbar\Im(\varphi)$ and~$B_t=B+\I\del\delbar\Re(\varphi)$. Moreover, the complexified K-energy is strictly convex unless~$\varphi_t$ is a trivial geodesic.
\end{lemma}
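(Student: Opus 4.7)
The plan is to start from the second variation \eqref{eq:secondvar} and use the two equations of \eqref{eq:Kahler_geod_system} in sequence, mimicking the algebra already carried out in Proposition~\ref{prop:convex_geod} so as to reduce matters to a pointwise sum of squares. The first condition $\Im(\ddot\varphi) = \norm{\del\Im(\dot\varphi)}^2_{\omega_t}$ is the K\"ahler geodesic equation for $v=\Im(\varphi)$, so the last term in \eqref{eq:secondvar} drops out and one arrives at \eqref{eq:secondvar_geod}. The Mabuchi-type norm $\norm{\m{D}\,\Im(\dot\varphi)}^2_{L^2(\omega_t)}$ appearing there is manifestly nonnegative, vanishing iff $\nabla^{1,0}\dot v$ is a holomorphic vector field; the task therefore reduces to establishing pointwise nonnegativity of the integrand \eqref{eq:integrand}.

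Work at a point in the adapted coframe of Lemma~\ref{lemma:secondvar}, with the same notation. Under the hypercritical assumption $B_t$ is (or can be taken to be) a positive $(1,1)$-form along the path, so $\norm{\del\Re(\dot\varphi)}^2_{B_t}=\sum_a\lambda_a^{-1}\abs{u_a}^2$, and the second equation of \eqref{eq:Kahler_geod_system} reads $\ddot u = \sum_a\lambda_a^{-1}\abs{u_a}^2$. Substituting this expression for $\ddot u$ into \eqref{eq:integrand} and collecting per-index, a direct algebraic manipulation parallel to the one yielding \eqref{eq:realpart_summand} rewrites the integrand as
\begin{equation*}
\sum_{a=1}^n \frac{\sin\eta + \lambda_a\cos\eta}{\lambda_a(1+\lambda_a^2)}\,\abs{u_a - \lambda_a v_a}^2.
\end{equation*}
The algebraic point that remains is the strict positivity of $\sin\eta + \lambda_a\cos\eta = \cos\eta\,(\tan\eta + \lambda_a)$. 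The condition $\varphi\in\m{H}$ gives $\cos\eta>0$ with $\eta\in(-\pi/2,\pi/2)$, and since $\lambda_a=\cot\vartheta_a$ with $\vartheta_a>0$, the inequality $\tan\eta > -\cot\vartheta_a$ translates into $\hat\vartheta < \sum_{b\neq a}\vartheta_b + \pi/2$, which is automatic from the hypercritical bound $\hat\vartheta<\pi/2$ and the positivity of the remaining angles. Hence each summand is nonnegative, and convexity follows.

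For strict convexity, if $\partial_t^2\tilde{\m{M}}(\varphi_t)\equiv 0$ along the path, then both the Mabuchi term and each summand above must vanish identically, forcing $\nabla^{1,0}\Im(\dot\varphi)$ to be a holomorphic vector field (the relation \eqref{eq:holpotential_omega}) and $u_a = \lambda_a v_a$ pointwise (the coordinate form \eqref{eq:kernel_coords} of \eqref{eq:holpotential_beta}). By Remark~\ref{rmk:kernel} and the trivial-geodesic construction of Section~\ref{sec:geod_equations}, these two conditions identify $\varphi_t$ with a trivial geodesic generated by an element of $\f{h}_0$. The main obstacle is the algebraic collapse of \eqref{eq:integrand} to the displayed weighted sum of squares, followed by the trigonometric sign check for $\sin\eta + \lambda_a\cos\eta$: the hypercritical hypothesis is essential at both steps, since without positivity of the $\lambda_a$ the $B_t$-norm is not even defined as a Riemannian quantity, and the angular coefficient can fail to have a definite sign.
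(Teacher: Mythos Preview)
Your proposal is correct and follows essentially the same approach as the paper's own proof: the same adapted coframe, the same substitution of both K\"ahler geodesic conditions, the same weighted sum-of-squares formula (your expression $\frac{\sin\eta+\lambda_a\cos\eta}{\lambda_a(1+\lambda_a^2)}\abs{u_a-\lambda_a v_a}^2$ is identical to the paper's $\frac{\abs{(\del\dot u)_a-\lambda_a(\del\dot v)_a}^2}{1+\lambda_a^2}(\cos\eta+\lambda_a^{-1}\sin\eta)$), and the same angular inequality $\sum_{b\neq a}\vartheta_b>\hat\vartheta-\pi/2$ for the sign check. The only cosmetic difference is that you substitute directly into the intermediate expression \eqref{eq:integrand} from Proposition~\ref{prop:convex_geod}, whereas the paper recomputes from \eqref{eq:complex_forms}; and your remark that $B_t$ ``is (or can be taken to be)'' positive is slightly informal---in fact positivity of $B_t$ is implicit in the very statement of \eqref{eq:Kahler_geod_system}, since otherwise $\norm{\del\Re(\dot\varphi)}^2_{B_t}$ is not defined.
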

The equations in~\eqref{eq:Kahler_geod_system} are simply describing two geodesics of K\"ahler potentials in the classes~$\alpha$ and~$\beta$. While it is well-known that there always exist~$\m{C}^{1,1}$ solutions of~\eqref{eq:Kahler_geod_system} between any pairs of smooth K\"ahler forms, it is not clear if these solutions lie in~$\m{H}$, even assuming that the endpoints do. It would, however, be very useful to use these geodesics, since the existence of~$\m{C}^{1,1}$-solutions would allow us to prove the uniqueness of solutions for~\eqref{eq:scalar_dHYM_singola}, similarly to what is done in Section~\ref{sec:uniqueness} for the case of surfaces. Recall also from Remark \ref{rmk:trivial_geod} that trivial geodesics also solve both~\eqref{eq:geodesic_coupled} and~\eqref{eq:Kahler_geod_system}.
\begin{proof}[Proof of Lemma~\ref{lem:Kahler_geod_convex}]
We proceed as in the proofs of Lemma~\ref{lemma:secondvar} and Proposition~\ref{prop:convex_geod}. Let~$\zeta^1,\dots,\zeta^n$ be a local frame for~$T^*X$ in which~$\omega$ is the standard symplectic form and~$B$ is diagonal, with eigenvalues~$\lambda_1,\dots,\lambda_n$. Let also~$\eta=\Theta(B_t+\I\omega_t)-\hat{\vartheta}$ and~$\varphi(t)=u(t)+\I v(t)$ for real functions~$u,v$ on~$X$. In what follows, we do not indicate the dependence on~$t$ explicitly.

The K\"ahler geodesic equation~\eqref{eq:Kahler_geod_system} implies that, with our new notation,
\begin{equation}
\begin{cases}
\ddot{v}=\sum\abs{(\del\dot{v})_a}^2\\
\ddot{u}=\sum\lambda_a^{-1}\abs{(\del\dot{u})_a}^2
\end{cases}
\end{equation}
and so we obtain
\begin{equation}
\mrm{e}^{-\I\hat{\vartheta}}\ddot{\varphi}\,(\omega^{\bb{C}}_\varphi)^n=\omega^nr(\omega^{\bb{C}})\mrm{e}^{\I\eta}\sum\left(\I\abs{(\del\dot{v})_a}^2+\lambda_a^{-1}\abs{(\del\dot{u})_a}^2\right).
\end{equation}
Together with~\eqref{eq:complex_forms}, this gives
\begin{equation}
\begin{split}
&\mrm{e}^{-\I\hat{\vartheta}}\ddot{\varphi}\,(\omega^{\bb{C}}_\varphi)^n-n\,\I\del\dot{\varphi}\wedge\delbar\dot{\varphi}\wedge\mrm{e}^{-\I\hat{\vartheta}}(\omega^{\bb{C}}_\varphi)^{n-1}=\\
=\omega^nr(\omega^{\bb{C}})\mrm{e}^{\I\eta}\sum\Bigg[&\I\abs{(\del\dot{v})_a}^2+\lambda_a^{-1}\abs{(\del\dot{u})_a}^2\\&-\frac{\abs{(\del\dot{u})_a}^2-\abs{(\del\dot{v})_a}^2+\I\left((\del\dot{u})_a(\delbar\dot{v})_a+(\del\dot{v})_a(\delbar\dot{u})_a\right)}{1+\lambda_a^2}(-\I+\lambda_a)\Bigg].
\end{split}
\end{equation}
A straightforward computation then shows
\begin{equation}\label{eq:Im_convexity_Kahlergeod}
\begin{split}
\Im&\big[\mrm{e}^{-\I\hat{\vartheta}}\left(\ddot{\varphi}\,(\omega^{\bb{C}}_\varphi)^n-n\,\I\del\dot{\varphi}\wedge\delbar\dot{\varphi}\wedge(\omega^{\bb{C}}_\varphi)^{n-1}\right)\big]=\\
&=\omega^nr(\omega^{\bb{C}})\sum_{a=1}^n\frac{\abs{(\del\dot{u})_a-\lambda_a(\del\dot{v})_a}^2}{1+\lambda_a^2}(\cos\eta+\lambda_a^{-1}\sin\eta).
\end{split}
\end{equation}
We claim that if the phase is hypercritical and~$u+\I v$ is an almost calibrated potential, for any ~$a=1,\dots,n$
\begin{equation}\label{eq:convex_positivity_Kahlergeod}
\cos\eta+\lambda_a^{-1}\sin\eta\geq 0.
\end{equation}
Our hypotheses imply that there is a lift~$\hat{\vartheta}\in(0,\pi/2)$, and~$-\pi/2<\eta-\hat{\vartheta}<\pi/2$. Also, as we are assuming~$B_u$ to be a K\"ahler metric, the eigenvalues~$\lambda_a$ are all positive. Then~\eqref{eq:convex_positivity_Kahlergeod} is equivalent to~$\tan\eta>-\lambda_a$, which is to say~$\eta>-\arctan\lambda_a$, or~$\eta+\pi/2>\arccot\lambda_a$. Unravelling the definition of~$\eta$ then~\eqref{eq:convex_positivity_Kahlergeod} is equivalent to
\begin{equation}
\sum_{b\not=a}\arccot\lambda_b>\hat{\vartheta}-\frac{\pi}{2}
\end{equation}
and~$\hat{\vartheta}<\pi/2$. To conclude the proof, it is sufficient to notice that~\eqref{eq:secondvar_geod} together with~\eqref{eq:Im_convexity_Kahlergeod} and~\eqref{eq:convex_positivity_Kahlergeod} imply that the second variation of~$\tilde{\m{M}}$ is positive along~$\varphi(t)$, and strictly positive unless~$\dot{v}$ is a holomorphy potential and~$\delbar \dot{u}=\nabla^{1,0}_{\omega_v}\dot{v}\lrcorner B$.
\end{proof}

\section{Uniqueness of solutions}\label{sec:uniqueness}

The question of the regularity of geodesics is of fundamental importance to establish the uniqueness of solutions of~\eqref{eq:scalar_dHYM}. In some cases, it might happen that one can show that two K\"ahler metrics are joined by a smooth geodesic of K\"ahler potentials. This is the case for example if one considers torus-invariant K\"ahler metrics on a toric manifold. We now show that in these cases uniqueness of solutions can be established directly from our formula for the second variation of the energy functional and Remark~\ref{rmk:convex_along_dHYM}. We assume that $\m{H}\not=\emptyset$, as in the previous section.
\begin{prop}
	Let~$\omega_0^{\bb{C}}$ and~$\omega_1^{\bb{C}}$ be two almost calibrated solutions of~\eqref{eq:scalar_dHYM} in the same complexified K\"ahler class $\alpha^{\bb{C}}$. Assume moreover that there is a \emph{smooth} geodesic of K\"ahler potentials joining~$\omega_0$ and~$\omega_1$ and that~$(X,\alpha^{\bb{C}})$ is supercritical. Then there exists~$f\in\mrm{Aut}_{\mrm{red}}(X)$ such that~$f^*\omega_1^{\bb{C}}=\omega_0^{\bb{C}}$.
\end{prop}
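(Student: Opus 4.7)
The strategy is to produce a convex path in $\m{H}$ joining the two solutions and apply the convexity of Remark~\ref{rmk:convex_along_dHYM}. Write the given smooth K\"ahler geodesic as $\omega_t=\omega_0+\I\del\delbar\varphi_2(t)$. Since $(X,\alpha^{\bb{C}})$ is supercritical and admits a dHYM solution by hypothesis, Theorem~\ref{thm:dHYM_stability} furnishes for each $t\in[0,1]$ a unique $B_t=B_0+\I\del\delbar\varphi_1(t)\in\beta$ solving the dHYM equation with respect to $\omega_t$; smoothness of $\varphi_1(t)$ in $t$ follows from the implicit function theorem, since the linearization of the supercritical dHYM equation is invertible at its unique solution. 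As \eqref{eq:scalar_dHYM} already contains the dHYM equation, the $B$-fields $B_0,B_1$ of the two given solutions are themselves dHYM solutions with respect to $\omega_0,\omega_1$, so by uniqueness they match the endpoints of our construction. Moreover $\omega_t^{\bb{C}}:=B_t+\I\omega_t$ lies in $\m{H}$ automatically, since at any dHYM solution $\mrm{e}^{-\I\hat\vartheta}(\omega_t^{\bb{C}})^n=r(\omega_t^{\bb{C}})\,\omega_t^n>0$.

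By Remark~\ref{rmk:convex_along_dHYM}, $\tilde{\m{M}}$ is convex along $\varphi_1(t)+\I\varphi_2(t)$. Both endpoints are critical points of $\tilde{\m{M}}$, hence global minima of the convex restriction to the path; elementary convex analysis then forces $\tilde{\m{M}}$ to be constant along the path, so $\partial_t^2\tilde{\m{M}}\equiv 0$.

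From the calculation in the proof of Lemma~\ref{lemma:secondvar}, along such a path the second variation equals $\norm{\m{D}\dot\varphi_2}^2_{L^2(\omega_t)}$ plus a pointwise non-negative integrand that, in the adapted coframe used there, reduces to $\abs{\gamma}\,r(\omega_t^{\bb{C}})\sum_a(1+\lambda_a^2)^{-1}\abs{\lambda_a v_a-u_a}^2\omega_t^n$, with $u_a=(\del\dot\varphi_1)_a$, $v_a=(\del\dot\varphi_2)_a$ and $\lambda_a$ the eigenvalues of $\omega_t^{-1}B_t$. Identical vanishing in $t$ forces $\m{D}\dot\varphi_2(t)=0$ together with $u_a=\lambda_a v_a$ for each $a$; that is, the pair $(\dot\varphi_1(t),\dot\varphi_2(t))$ solves the kernel system~\eqref{eq:holpotential_omega}--\eqref{eq:holpotential_beta} at every $t$. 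In particular $Z:=\nabla^{1,0}_{\omega_0}\dot\varphi_2(0)\in\f{h}_0$ is a real-holomorphic vector field admitting $\dot\varphi_2(0)$ as a holomorphy potential with respect to $\omega_0$.

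To conclude, recall the standard fact that the K\"ahler geodesic starting at $\omega_0$ with initial velocity a holomorphy potential $\dot\varphi_2(0)$ is given by $\omega_t=f_t^*\omega_0$, where $f_t$ is the flow of the real-holomorphic vector field $\tfrac{1}{2}\nabla_{\omega_0}\dot\varphi_2(0)$. By uniqueness of the dHYM solution in its class, $f_t^*B_0$ also solves dHYM with respect to $f_t^*\omega_0=\omega_t$, hence $B_t=f_t^*B_0$ and $\omega_t^{\bb{C}}=f_t^*\omega_0^{\bb{C}}$ is precisely a trivial geodesic in the sense of Section~\ref{sec:geod_equations}. Setting $f:=f_1^{-1}\in\mrm{Aut}_{\mrm{red}}(X)$ yields the desired automorphism with $f^*\omega_1^{\bb{C}}=\omega_0^{\bb{C}}$. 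The main obstacle I anticipate is the regularity step, namely verifying the smooth dependence of $\varphi_1(t)$ on $t$ so that the second variation is classically well-defined along the entire path; once this is granted, everything else is a direct application of the convexity package developed in Section~\ref{sec:complexified_Kenergy}.
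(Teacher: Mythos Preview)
Your proof is correct and follows essentially the same approach as the paper: build the dHYM-adapted path over the smooth K\"ahler geodesic via Theorem~\ref{thm:dHYM_stability}, invoke Remark~\ref{rmk:convex_along_dHYM} to get convexity, force the second variation to vanish, read off the kernel equations~\eqref{eq:holpotential_omega}--\eqref{eq:holpotential_beta}, and conclude by flowing along the resulting holomorphic vector field together with uniqueness of dHYM solutions. Your use of the implicit function theorem to justify smooth dependence of $\varphi_1(t)$ on $t$ is a welcome addition; the paper flags exactly this point as a concern but does not address it.
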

One way to prove this would be to use Collins-Yau's theory of geodesics for the dHYM equation, in the hyper-critical case. But the existence and uniqueness of solutions of dHYM coming from Theorem~\ref{thm:dHYM_stability} allow us to give a more direct argument.
\begin{proof}
Let~$\set{v_t}\subset\m{C}^\infty(X,\bb{R})$ be a smooth geodesic of K\"ahler potentials such that the corresponding path of K\"ahler forms~$\omega_t\coloneqq\omega_0+\I\del\delbar v_t$ joins~$\omega_0$ and~$\omega_1$. As we are assuming that the phase is supercritical, by Theorem~\ref{thm:dHYM_stability} there is a corresponding path of functions~$u_t$ such that~$B_t\coloneqq B+\I\del\delbar\,u_t$ solves~$\Im\left(\mrm{e}^{-\I\hat{\vartheta}}(B_t+\I\omega_t)\right)=0$. This path is smooth, by Corollary~\ref{cor:smoothdependence}. Then the second variation of~$\tilde{\m{M}}$ along~$\varphi_t\coloneqq u_t+\I v_t$ is convex, and as both endpoints of~$\varphi_t$ are critical points of~$\tilde{\m{M}}$, the second variation of~$\tilde{\m{M}}(\varphi_t)$ must vanish identically. So~$\dot{v}$ is a holomorphy potential with respect to~$\omega_v$, and~$\dot{u}$ satisfies~$\nabla^{1,0}_{\omega_v}\dot{v}\lrcorner B_u=\I\delbar\dot{u}$ for every~$t$.

The vector fields~$-\frac{1}{2}\nabla_{\omega_v}\dot{v}_t$ are all holomorphic, and their isotopy~$f_t$ is a family of biholomorphisms in~$\mrm{Aut}_{\mrm{red}}(X)$, by definition. Moreover,~$f_t$ conjugates~$\omega_v$ and~$\omega_0$, so by the uniqueness of solutions of the dHYM equation we find~$f_t^*B_u=B_0$, and in particular~$f_1^*(B_1+\I\omega_1)=B_0+\I\omega_0$.
\end{proof}
As a consequence, we obtain a uniqueness result on toric manifolds. In symplectic coordinates on a toric K\"ahler manifold, a K\"ahler geodesic simply corresponds to a line of symplectic potentials \cite[Proposition $2.2$]{SongZelditch_geodesics}. In particular, the K\"ahler geodesic between two smooth toric K\"ahler metrics is smooth.
\begin{prop}
	Let~$X$ be a toric manifold, and let~$\alpha^{\bb{C}}\coloneqq\beta+\I\alpha$ be a torus-invariant complexified K\"ahler class with supercritical phase. Then, the torus-invariant solutions of~\eqref{eq:scalar_dHYM_singola} in~$\alpha^{\bb{C}}$ are unique up to the action of~$\mrm{Aut}_{\mrm{red}}(X)$.
\end{prop}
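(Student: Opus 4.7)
The plan is to reduce the statement to the preceding Proposition by producing a \emph{smooth} geodesic of K\"ahler potentials joining the K\"ahler parts of the two torus-invariant solutions. Let $T\cong(S^1)^n$ be the compact torus acting on $X$, and fix two $T$-invariant solutions $\omega_i^{\bb{C}}=B_i+\I\omega_i$ ($i=0,1$) of~\eqref{eq:scalar_dHYM_singola} in $\alpha^{\bb{C}}$. Via Guillemin's correspondence, the $T$-invariant K\"ahler forms $\omega_0,\omega_1\in\alpha$ are encoded by smooth, strictly convex \emph{symplectic potentials} $g_0,g_1$ on the moment polytope, sharing the boundary behaviour dictated by $\alpha$.

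The key input is the classical observation of Guan (further developed by Donaldson and by Song--Zelditch) that for $T$-invariant K\"ahler potentials on a toric manifold, the geodesic equation in the space of K\"ahler potentials linearises under Legendre transform: the geodesic joining $\omega_0$ and $\omega_1$ corresponds, on the polytope side, to the affine interpolation $g_t:=(1-t)g_0+tg_1$. Since each $g_t$ is smooth and strictly convex on the interior with the correct boundary behaviour, this produces a \emph{smooth}, $T$-invariant geodesic of K\"ahler potentials $v_t$ with $\omega_t=\omega_0+\I\del\delbar v_t$.

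With $v_t$ smooth and $T$-invariant in hand, I would then invoke the preceding Proposition: since $\alpha^{\bb{C}}$ is supercritical, Theorem~\ref{thm:dHYM_stability} produces, for each $t$, a unique $B_t=B_0+\I\del\delbar u_t$ solving the dHYM equation with respect to $\omega_t$, and the argument of the Proposition yields an isotopy $f_t\in\mrm{Aut}_{\mrm{red}}(X)$ with $f_1^*\omega_1^{\bb{C}}=\omega_0^{\bb{C}}$. To close the argument I need $u_t$ to depend smoothly on $t$, which is exactly the concern flagged by the ``todo'' comment in the proof of the preceding Proposition. In the toric setting this is an easier question: by uniqueness in Theorem~\ref{thm:dHYM_stability}, each $B_t$ automatically inherits $T$-invariance from $\omega_t$, and smoothness in $t$ follows from the implicit function theorem applied to the dHYM operator, whose linearisation on the supercritical branch is a $T$-equivariant elliptic operator with kernel and cokernel only the constants.

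The main obstacle is therefore exactly this smoothness of $u_t$; once it is settled, the conclusion is immediate from the preceding Proposition. I expect no essentially new difficulty in the toric invariant case, since after restriction to $T$-invariant functions the dHYM equation becomes a scalar elliptic PDE on the moment polytope for which standard continuity-method arguments apply.
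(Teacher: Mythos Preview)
Your proposal is correct and follows exactly the approach the paper intends: the statement is presented in the paper as an immediate consequence of the preceding Proposition, and the only missing ingredient is the existence of a smooth geodesic between two torus-invariant K\"ahler potentials, which you supply via Guan's linearisation under Legendre transform. Your discussion of the smoothness of $t\mapsto u_t$ (addressing the author's own \texttt{todo} flag) is a welcome addition and is handled correctly via the implicit function theorem applied to the supercritical dHYM operator, whose linearisation on $T$-invariant functions has trivial kernel and cokernel modulo constants.
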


\subsection{Complex surfaces}

If~$X$ is a complex surface, the coupled system~\eqref{eq:scalar_dHYM} can be simplified substantially by a change of variables. If we define
\begin{equation}
\chi=\sin\hat{\vartheta}\,B-\cos\hat{\vartheta}\,\omega
\end{equation}
and rescale the coupling constant~$\gamma$ to~$\tilde{\gamma}=\abs{\gamma}/\sin^2\hat{\vartheta}$, then~\eqref{eq:scalar_dHYM} is equivalent to
\begin{equation}\label{eq:scalar_dHYM_surface}
\begin{dcases}
\chi^2=\omega^2\\
s(\omega)=\tilde{\gamma}\Lambda_\omega\chi+\tilde{c}
\end{dcases}
\end{equation}
where~$\tilde{c}$ is a constant, determined through integration by~$[\omega]$,~$[\chi]$, and~$\tilde{\gamma}$. Note that the definition of~$\vartheta$ implies~$\int_X\chi^2=\alpha^2$, so that the first equation in~\eqref{eq:scalar_dHYM_surface} can be solved if~$[\chi]$ or~$-[\chi]$ is a K\"ahler class, by Yau's solution of the Calabi Conjecture. In~\cite{JacobYau_special_Lag} Jacob and Yau proved that this condition is necessary and sufficient for the dHYM equation~$\chi^2=\omega^2$ to have solutions, for any fixed K\"ahler form~$\omega\in\alpha$. We will only consider the case~$[\chi]>0$, so that the second equation in~\eqref{eq:scalar_dHYM_surface} is a twisted cscK equation.

The system~\eqref{eq:scalar_dHYM_surface} for~$\tilde{\gamma}=1$ is a particular case of the \emph{coupled cscK system} considered in~\cite{DatarPingali_coupledcscK}, which in turn generalizes the \emph{coupled K\"ahler-Einstein equations} of~\cite{HultgrenWittNystrom_coupledKE}. Similarly to the general dimensional situation considered in Section~\ref{sec:complexified_Kenergy},~\eqref{eq:scalar_dHYM_surface} is the Euler-Lagrange equation of a functional defined on~$[\omega]\times[\chi]$ by the condition
\begin{equation}
D\m{M}'_{(\omega,\chi)}(\dot{u},\dot{v})=-\tilde{\gamma}\int_X\dot{v}(\chi^2-\omega^2)-\int_X\dot{u}\left(s(\omega)-\tilde{c}-\tilde{\gamma}\Lambda_{\omega}\chi\right)\omega^2.
\end{equation}
As is the case for~$\tilde{\m{M}}$, the second variation of~$\m{M}'$ around a critical point is positive semi-definite. In general, the second variation of~$\m{M}'$ along a path~$(u_t,v_t)$ is
\begin{equation}
\begin{split}
\partial^2_t\m{M}'(u_t,v_t)=&\tilde{\gamma}\int_X\Big(\norm*{\partial\dot{v}}_{\chi_v}^2-\ddot{v}-\norm*{\partial\dot{u}}_{\chi_v}^2\Big){\chi_v}^2+\tilde{\gamma}\int_X\Big(\ddot{v}-\left\langle\mrm{d}\dot{v},\mrm{d}\dot{u}\right\rangle_{\omega_u}+\ddot{u}\,\Lambda_{\omega_u}{\chi_v}\Big){\omega_u}^2\\
&+\norm*{\m{D}_{\omega_u}\,\dot{u}}^2_{L^2({\omega_u})}-\int_X\left(\ddot{u}-\norm*{\del\dot{u}}^2_{\omega_u}\right)(s(\omega_u)-\tilde{c}){\omega_u}^2.
\end{split}
\end{equation}
As both~$[\omega]$ and~$[\chi]$ are K\"ahler classes, it makes sense to consider paths~$(u_t,v_t)$ that satisfy
\begin{equation}\label{eq:surface_geod}
\begin{cases}
\ddot{u}=\norm*{\del\dot{u}}_{\omega_u}^2,\\
\ddot{v}=\norm*{\del\dot{v}}_{\chi_v}^2.
\end{cases}
\end{equation}
\begin{prop}
The functional~$\m{M}'$ is convex along paths~$(u_t,v_t)$ that satisfy the geodesic equations~\eqref{eq:surface_geod}, and it is affine along trivial geodesics (c.f. Section~\ref{sec:geod_equations}).
\end{prop}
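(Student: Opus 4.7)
The strategy is to substitute the geodesic equations~\eqref{eq:surface_geod} directly into the second-variation formula for~$\m{M}'$. The hypothesis~$\ddot{u}=\norm*{\del\dot{u}}_{\omega_u}^2$ makes the last, twisted scalar-curvature integrand vanish identically, while~$\ddot{v}=\norm*{\del\dot{v}}_{\chi_v}^2$ cancels the~$\ddot{v}$ contributions in the two~$\tilde{\gamma}$-integrals. What survives is
\begin{equation}
\partial^2_t\m{M}'(u_t,v_t)=\norm*{\m{D}_{\omega_u}\dot{u}}^2_{L^2(\omega_u)}+\tilde{\gamma}\int_X\left[-\norm*{\del\dot{u}}_{\chi_v}^2\,\chi_v^2+\left(\norm*{\del\dot{v}}^2_{\chi_v}-\langle \mrm{d}\dot{v},\mrm{d}\dot{u}\rangle_{\omega_u}+\norm*{\del\dot{u}}^2_{\omega_u}\Lambda_{\omega_u}\chi_v\right)\omega_u^2\right],
\end{equation}
and the Mabuchi--Lichnerowicz term is manifestly non-negative.

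To handle the bracketed integrand, I would fix a unitary coframe~$\zeta^1,\zeta^2$ for~$\omega_u$ in which~$\chi_v=\I\sum_a\mu_a\,\zeta^a\wedge\bar{\zeta}^a$ is diagonal, with~$\mu_a>0$ since~$[\chi]$ is K\"ahler by hypothesis. Writing~$\del\dot{u}=\sum_a u_a\zeta^a$ and~$\del\dot{v}=\sum_a v_a\zeta^a$, and using the elementary identities~$\chi_v^2=\mu_1\mu_2\,\omega_u^2$ and~$\Lambda_{\omega_u}\chi_v=\mu_1+\mu_2$, a routine expansion collapses the bracket pointwise to
\begin{equation}
\sum_{a=1}^{2}\mu_a^{-1}\,\abs*{\mu_a u_a-v_a}^2,
\end{equation}
which is manifestly non-negative. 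Combined with the Lichnerowicz term this yields convexity of~$\m{M}'$ along the paths in~\eqref{eq:surface_geod}, with strict convexity unless~$\dot{u}$ is a holomorphy potential for~$\omega_u$ and~$\mu_a u_a=v_a$ pointwise for every~$a$.

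For the affine statement along trivial geodesics: such a geodesic is induced by the flow of a real holomorphic vector field~$\xi$ preserving~$\alpha^{\bb{C}}$, so that~$\dot{u}$ and~$\dot{v}$ coincide with the (pulled back) holomorphy potentials of~$\xi$ with respect to~$\omega_u$ and~$\chi_v$ respectively. The former fact makes~$\norm*{\m{D}_{\omega_u}\dot{u}}^2_{L^2(\omega_u)}$ vanish identically, while expanding the two holomorphy-potential relations~$\xi\lrcorner\omega_u=\I\delbar\dot{u}$ and~$\xi\lrcorner\chi_v=\I\delbar\dot{v}$ in the unitary frame produces exactly~$v_a=\mu_a u_a$, so the remaining integrand also vanishes. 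Hence~$\partial^2_t\m{M}'\equiv 0$ along trivial geodesics, as claimed.

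The only subtle point is the pointwise algebraic simplification leading to the sum-of-squares structure; once this is identified the argument is an exact analogue of the computations carried out in Lemma~\ref{lemma:secondvar} and Proposition~\ref{prop:convex_geod}, and positivity of~$\tilde{\gamma}$ under the standing hypothesis~$[\chi]>0$ ensures the correct direction of the inequality.
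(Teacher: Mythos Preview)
Your proof is correct and follows essentially the same route as the paper's: substitute the geodesic equations into the second-variation formula, diagonalise~$\chi_v$ in an~$\omega_u$-unitary coframe, and collapse the~$\tilde{\gamma}$-integrand to the sum of squares~$\sum_a\mu_a^{-1}\abs{v_a-\mu_a u_a}^2$. Your treatment of the affine case is slightly more explicit than the paper's (which only records the equality conditions~$\m{D}_{\omega_u}\dot{u}=0$ and~$\nabla^{1,0}_{\omega_u}\dot{u}=\nabla^{1,0}_{\chi_v}\dot{v}$), but the content is identical.
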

\begin{proof}
If~$u_t$ and~$v_t$ are K\"ahler geodesics, we have
\begin{equation}
\begin{split}
\partial^2_t\m{M}'(u_t,v_t)=&\norm*{\m{D}_{\omega_u}\,\dot{u}}^2_{L^2({\omega_u})}+\\
&+\tilde{\gamma}\int_X\left[\Big(\norm*{\del\dot{v}}_{\chi_v}^2-\left\langle\mrm{d}\dot{v},\mrm{d}\dot{u}\right\rangle_{\omega_u}+\norm*{\del\dot{u}}_{\omega_u}^2\,\Lambda_{\omega_u}{\chi_v}\Big){\omega_u}^2-\norm*{\partial\dot{u}}_{\chi_v}^2\,{\chi_v}^2\right].
\end{split}
\end{equation}
We will show that the integrand of the~$\tilde{\gamma}$ factor is (semi-)positive. As in the proof of Proposition~\ref{prop:convex_geod}, fix a coframe for~$TX$ in which~$\omega_u$ is the canonical symplectic form and~$\chi_v$ is diagonal, with eigenvalues~$\lambda_1$,~$\lambda_2$. Then the integrand is
\begin{equation}
\begin{gathered}
\lambda_1^{-1}\abs*{\del\dot{v}_1}^2+\lambda_2^{-1}\abs*{\del\dot{v}_2}^2-\left(\del\dot{u}_1\,\overline{\del\dot{v}}_1+\del\dot{v}_1\,\overline{\del\dot{u}}_1+\del\dot{u}_2\,\overline{\del\dot{v}}_2+\del\dot{v}_2\,\overline{\del\dot{u}}_2\right)\\
\quad+\left(\abs{\del\dot{u}_1}^2+\abs{\del\dot{u}_2}^2\right)(\lambda_1+\lambda_2)-\left(\lambda_2\abs*{\del\dot{u}_1}^2+\lambda_1\abs*{\del\dot{u}_2}^2\right)=\\
=\frac{\abs*{\del\dot{v}_1-\lambda_1\del\dot{u}_1}^2}{\lambda_1}+\frac{\abs*{\del\dot{v}_2-\lambda_2\del\dot{u}_2}^2}{\lambda_2}.
\end{gathered}
\end{equation}
So~$\partial^2_t\m{M}'(u_t,v_t)$ is always semi-positive, and strictly positive unless~$\nabla^{1,0}_{\omega_u}\dot{u}$ is holomorphic and~$\nabla^{1,0}_{\omega_u}\dot{u}=\nabla^{1,0}_{\chi_v}\dot{v}$.
\end{proof}
We can use this convexity result to prove the uniqueness of solutions of the scalar curvature equation with B-field on surfaces, in the case when~$[\chi]$ is a K\"ahler class, up to the action of the reduced automorphism group of~$X$.

For the remainder of this section, let~$\tilde{\beta}\coloneqq[\chi]$ and assume that~$\alpha$,~$\tilde{\beta}$ are K\"ahler classes such that~$\alpha^2=\tilde{\beta}^2$. Fixing reference metrics~$\omega_0\in\alpha$ and~$\chi_0\in\tilde{\beta}$, we can write
\begin{equation}
\begin{split}
\m{M}'(u,v)=&\int\log\left(\frac{\omega_u^2}{\omega_0^2}\right)\omega_u^2+\frac{\tilde{c}}{3}\sum_{p=0}^2\int u\,\omega_0^p\wedge\omega_u^{2-p}-\int u\,(\omega_0+\omega_u)\wedge\mrm{Ric}(\omega_0)\\
&+\tilde{\gamma}\left(\int v\,\omega_u^2-\frac{1}{3}\sum_{p=0}^2\int v\,\chi_0^p\wedge\chi_v^{2-p}+\int u\,(\omega_0+\omega_u)\wedge\chi_0\right)
\end{split}
\end{equation}
or more compactly as
\begin{equation}\label{eq:Kenergy_surf_ChenTian}
\m{M}'(u,v)=\m{H}^\alpha(u)+\frac{\tilde{c}}{3}\m{E}^\alpha(u)-\m{E}^\alpha_{\mrm{Ric}(\omega_0)}(u)-\tilde{\gamma}\left(\frac{1}{3}\m{E}^{\tilde{\beta}}(v)-\m{E}^\alpha_{\chi_0}(u)-\int v\,\omega_u^2\right)
\end{equation}
where~$\m{H}^\alpha$ and~$\m{E}^\alpha$ are the entropy and (twisted) energy functionals of the K\"ahler class~$\alpha$, while~$\m{E}^{\tilde{\beta}}$ is the energy functional of the class~$\tilde{\beta}$.

Equation~\eqref{eq:Kenergy_surf_ChenTian} shows that~$\m{M}'(u,v)$ is well-defined even if~$u$ and~$v$ are not smooth, as long as~$\omega+\I\del\delbar u$ and~$\chi+\I\del\delbar v$ are positive currents with $L^\infty$ coefficients, similarly to what happens for the classical K-energy, cf.\ \cite[\S$3$]{Chen_lowerbounds}. Hence, analogously to~\cite[Theorem~$1.1$]{BermanBerndtsson_uniquness}, we have the following.
\begin{prop}\label{prop:convexity_weakgeod}
	For any two K\"ahler classes~$\alpha$,~$\tilde{\beta}$ on~$X$, the energy functional~$\m{M}'$ is convex along weak geodesic segments in~$\alpha\times\tilde{\beta}$, i.e.\ paths~$u_t,v_t\in\m{C}^{1,1}(X)$ satisfying~\eqref{eq:surface_geod}. In particular, if there is a solution to equation~\eqref{eq:scalar_dHYM_surface}, then~$\m{M}'$ is bounded below on~$\alpha\times\tilde{\beta}$.
\end{prop}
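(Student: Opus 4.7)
The plan is to adapt the strategy of~\cite{BermanBerndtsson_uniquness}, combining the smooth convexity computation just carried out with the regularity theory for weak geodesics of K\"ahler potentials. The Chen--Tian-type expansion~\eqref{eq:Kenergy_surf_ChenTian} decomposes $\m{M}'$ as the sum of the entropy $\m{H}^\alpha(u)$, of multiples of the energy and twisted energy functionals $\m{E}^\alpha(u)$, $\m{E}^\alpha_{\mrm{Ric}(\omega_0)}(u)$, $\m{E}^\alpha_{\chi_0}(u)$, $\m{E}^{\tilde\beta}(v)$, and of the coupled integral $\int_X v\,\omega_u^2$. Each of these expressions is well-defined for $u,v\in\m{C}^{1,1}(X)$, since the relevant Monge--Amp\`ere masses are bounded.

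First I would handle the ``uncoupled'' terms. Along a weak K\"ahler geodesic, each of the energy and twisted energy functionals $\m{E}^\alpha(u_t)$, $\m{E}^\alpha_{\mrm{Ric}(\omega_0)}(u_t)$, $\m{E}^\alpha_{\chi_0}(u_t)$, $\m{E}^{\tilde\beta}(v_t)$ is affine in $t$; this is the usual Aubin--Mabuchi calculation, which extends from smooth geodesics to the $\m{C}^{1,1}$ case by approximation and weak continuity of the mixed Monge--Amp\`ere operator on bounded potentials. The convexity of the entropy $\m{H}^\alpha(u_t)$ along the weak geodesic $u_t$ is exactly the main analytic input of~\cite{BermanBerndtsson_uniquness}, and I would invoke it directly.

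The key remaining step is the analysis of $t\mapsto\int_X v_t\,\omega_{u_t}^2$. In the smooth setting, the previous proposition showed that the \emph{entire} combination making up $\m{M}'$ has a non-negative second derivative via a pointwise sum-of-squares identity, and no analogous identity is available for the cross term in isolation. I would recover convexity in the weak case by approximating the geodesic pair by smooth $\epsilon$-geodesics $(u_t^\epsilon,v_t^\epsilon)$, i.e.\ by solutions of perturbed homogeneous Monge--Amp\`ere equations on $X\times A$ in the sense of~\cite{Chen2000, TosattiWeinkove_C11regularity}. Applying the smooth second variation formula to these yields
\begin{equation*}
\partial_t^2\m{M}'(u_t^\epsilon,v_t^\epsilon)\geq -C\epsilon,
\end{equation*}
and then letting $\epsilon\to 0$, using the uniform $\m{C}^{1,1}$ bounds together with the weak continuity of the mixed Monge--Amp\`ere operator for bounded potentials, delivers the convexity of $t\mapsto\m{M}'(u_t,v_t)$ in the distributional (hence convex) sense.

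For the lower bound, let $(u_\star,v_\star)$ be a critical point given by a solution of~\eqref{eq:scalar_dHYM_surface}. Given any smooth pair $(u_1,v_1)\in\alpha\times\tilde\beta$, join $(u_\star,v_\star)$ to $(u_1,v_1)$ by a $\m{C}^{1,1}$ weak geodesic $(u_t,v_t)$. The convex function $t\mapsto\m{M}'(u_t,v_t)$ has a right derivative at $t=0$ that, by the smoothness of $(u_\star,v_\star)$ and the standard extension of the first variation formula to one-sided derivatives along $\m{C}^{1,1}$ geodesics, equals the first variation of $\m{M}'$ at the critical point and therefore vanishes. Convexity then gives $\m{M}'(u_1,v_1)\geq\m{M}'(u_\star,v_\star)$. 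The main obstacle is the coupled cross term: one must verify that the delicate cancellation making the smooth second variation non-negative is preserved in the $\m{C}^{1,1}$ limit, which is where the full power of the pluripotential-theoretic regularity results for weak geodesics and the weak continuity of the mixed Monge--Amp\`ere operator is needed.
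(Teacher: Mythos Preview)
Your overall strategy---split off the K-energy piece and invoke Berman--Berndtsson, then treat the remaining energy-type terms by pluripotential/approximation arguments---is exactly the paper's approach. However, two of your intermediate claims are incorrect and, as stated, the argument has a genuine gap.

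First, the twisted energies $\m{E}^\alpha_{\mrm{Ric}(\omega_0)}(u_t)$ and $\m{E}^\alpha_{\chi_0}(u_t)$ are \emph{not} affine along K\"ahler geodesics. A direct computation in a frame diagonalising $\omega_u$ and the twisting form $\eta$ (eigenvalues $\mu_a$) gives, in dimension~$2$,
\[
\partial_t^2\,\m{E}^\alpha_\eta(u_t)=\int_X\big(\mu_1\abs{(\partial\dot u)_1}^2+\mu_2\abs{(\partial\dot u)_2}^2\big)\,\omega_u^2,
\]
which is non-zero in general. Only the untwisted Aubin--Mabuchi energies $\m{E}^\alpha$ and $\m{E}^{\tilde\beta}$ are affine.

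Second, and more seriously, the bound $\partial_t^2\m{M}'(u_t^\epsilon,v_t^\epsilon)\geq -C\epsilon$ cannot be obtained by applying the smooth second-variation formula to the \emph{whole} functional. Along $\epsilon$-geodesics one picks up the error term
\[
-\int_X\big(\ddot u^\epsilon-\norm{\partial\dot u^\epsilon}^2_{\omega}\big)\big(s(\omega_{u^\epsilon})-c\big)\,\omega_{u^\epsilon}^2,
\]
and while $\ddot u^\epsilon-\norm{\partial\dot u^\epsilon}^2$ is $O(\epsilon)$, the scalar curvature $s(\omega_{u^\epsilon})$ involves fourth derivatives of $u^\epsilon$ and is \emph{not} uniformly controlled by the available $\m{C}^{1,1}$ estimates. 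This is precisely the obstruction that forced Berman--Berndtsson to argue via positivity of direct images rather than via $\epsilon$-approximation.

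The correct grouping, which is what the paper does, is: package $\m{H}^\alpha(u)+\tfrac{c}{3}\m{E}^\alpha(u)-\m{E}^\alpha_{\mrm{Ric}(\omega_0)}(u)$ together as the K-energy of $\alpha$ and invoke~\cite{BermanBerndtsson_uniquness} for its convexity along weak geodesics. The remaining $\tilde\gamma$-piece
\[
\tilde\gamma\Big(\m{E}^\alpha_{\chi_0}(u)+\int_X v\,\omega_u^2-\tfrac{1}{3}\m{E}^{\tilde\beta}(v)\Big)
\]
is a generalised energy functional in which \emph{no scalar curvature appears}; its second variation along $\epsilon$-geodesics differs from the sum-of-squares expression only by terms of the form $(\ddot u^\epsilon-\norm{\partial\dot u^\epsilon}^2)\Lambda_{\omega}\chi$ and $(\ddot v^\epsilon-\norm{\partial\dot v^\epsilon}^2_\chi)$ times bounded quantities, so here your $O(\epsilon)$ estimate is legitimate (equivalently, the argument of \cite[\S3.1.1]{BermanBerndtsson_uniquness} applies directly). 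Your argument for the lower bound, once convexity is in place, is fine.
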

Indeed, the main point of the proof of~\cite[Theorem~$1.1$]{BermanBerndtsson_uniquness} is to show that the entropy part of the K-energy functional is continuous along geodesics of K\"ahler potentials (a priori, it is only lower semi-continuous). Our functional~$\m{M}'$ is the sum of the K-energy and a generalised energy functional which is convex along weak geodesics, so the same argument as~\cite[\S$3.1.1$]{BermanBerndtsson_uniquness} gives Proposition~\ref{prop:convexity_weakgeod}.
\begin{rmk}
It is natural to conjecture that an analogue of~\ref{prop:convexity_weakgeod} should also hold for complexified K\"ahler classes on higher-dimensional manifolds. In fact, if we knew the existence of~$\m{C}^{1,1}$ complexified geodesics (so if Conjecture~\ref{conj:geodesics_existence} holds), the same proof would show that the complexified K-energy functional~\eqref{eq:Kenergy_decomposition} is convex along weak geodesics in the space of almost calibrated representatives, and if there is a solution of~\eqref{eq:scalar_dHYM_singola}, then~$\tilde{\m{M}}$ is bounded below on this space.
\end{rmk}
Our main goal in this section is to show that, with minor changes, it is also possible to adapt the proof from~\cite{BermanBerndtsson_uniquness} to prove the uniqueness of solutions of~\eqref{eq:scalar_dHYM_surface}.
\begin{thm}\label{thm:surface_uniqueness}
Assume that~$\alpha$ and~$\tilde{\beta}$ are K\"ahler classes on~$X$ such that~$\alpha^2=\tilde{\beta}^2$, and that~$(\omega_0,\chi_0)$ and~$(\omega_1,\chi_1)$ are solutions of~\eqref{eq:surface_geod} in~$\alpha\times\tilde{\beta}$. Then there is~$\phi\in\mrm{Aut}_{\mrm{red}}(X)$ such that~$\phi^*(\omega_1,\chi_1)=(\omega_0,\chi_0)$.
\end{thm}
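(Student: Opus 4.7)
The plan is to adapt the Berman--Berndtsson uniqueness argument to the coupled system on surfaces, using the convexity result of Proposition~\ref{prop:convexity_weakgeod} as the main analytic input. Given two solutions $(\omega_0,\chi_0)$ and $(\omega_1,\chi_1)$ of~\eqref{eq:scalar_dHYM_surface}, I would first connect them by a weak $\m{C}^{1,1}$-geodesic $(u_t,v_t)$, obtained by solving the degenerate Monge--Amp\`ere equations in each factor $\alpha$ and $\tilde{\beta}$ separately via the Chen and Tosatti--Weinkove regularity theorems. By Proposition~\ref{prop:convexity_weakgeod}, $\m{M}'$ is convex along this weak geodesic, and since both endpoints are critical points (hence minima on the connected component) of $\m{M}'$, the function $t\mapsto \m{M}'(u_t,v_t)$ must be affine.

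Next, I would exploit the decomposition~\eqref{eq:Kenergy_surf_ChenTian}. The pieces $\m{E}^\alpha$, $\m{E}^\alpha_{\mrm{Ric}(\omega_0)}$, $\m{E}^\alpha_{\chi_0}$, $\m{E}^{\tilde{\beta}}$, and the coupling term $\int v\,\omega_u^2$ are defined by intersection pairings of bounded $(1,1)$-currents and can be shown, by direct computation in the spirit of Chen--Tian, to be affine (or computable in closed form) along weak $\m{C}^{1,1}$-geodesics. Subtracting these from the affine function $\m{M}'(u_t,v_t)$ forces the Mabuchi entropy $t\mapsto\m{H}^\alpha(u_t)$ to be affine as well. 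Invoking Theorem~1.2 of~\cite{BermanBerndtsson_uniquness} (the characterisation of equality in the convexity of the entropy), I conclude that the $\alpha$-geodesic is generated by a real-holomorphic vector field $\xi$, i.e.\ there exists an isotopy $\phi_t\in\mrm{Aut}_{\mrm{red}}(X)$ with $\phi_t^*\omega_1=\omega_{u_t}$, and hence $\phi_1^*\omega_1=\omega_0$.

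The last step is to upgrade this to the coupled statement on $\chi$. The pointwise analysis in the convexity proof shows that strict inequality in the second variation of $\m{M}'$ fails only when $\nabla^{1,0}_{\omega_u}\dot{u}$ is holomorphic and $\nabla^{1,0}_{\omega_u}\dot{u}=\nabla^{1,0}_{\chi_v}\dot{v}$. Since $\xi=\nabla^{1,0}_{\omega_{u_t}}\dot{u}_t$ is now known to be holomorphic and independent of $t$, the coupling identity forces $\dot{v}_t$ to be the $\chi_{v_t}$-potential of the \emph{same} vector field $\xi$; consequently the flow $\phi_t$ also satisfies $\phi_t^*\chi_1=\chi_{v_t}$. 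Setting $\phi=\phi_1$ gives $\phi^*(\omega_1,\chi_1)=(\omega_0,\chi_0)$.

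The main obstacle will be the regularity/approximation step: one needs the affineness of $\m{M}'$ along the $\m{C}^{1,1}$-geodesic to imply the affineness of $\m{H}^\alpha$ (which a priori is only upper semicontinuous in the weak topology) and also to imply the pointwise coupling identity $\nabla^{1,0}_{\omega_u}\dot{u}=\nabla^{1,0}_{\chi_v}\dot{v}$ almost everywhere along the path. This is exactly where the analytic core of~\cite[\S3]{BermanBerndtsson_uniquness} is needed: one approximates $(u_t,v_t)$ by smooth $\varepsilon$-geodesics, applies dominated convergence to the explicit energy pieces, and uses the semicontinuity/Jensen-type arguments of Berman--Berndtsson to pass to the limit in the entropy and in the quadratic coupling term. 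Once these two analytic ingredients are in place, the geometric conclusion follows as outlined above.
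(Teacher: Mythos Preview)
Your proposal takes a genuinely different route from the paper. The paper follows the \emph{perturbation} strategy of~\cite[Theorem~4.4]{BermanBerndtsson_uniquness}: it introduces the auxiliary strictly convex functional~$\m{F}_\mu$, minimises it over the~$\mrm{Aut}_0(X)$-orbits~$S_i$ of the two solutions, uses the Fredholm alternative for the Hessian operator~$Q$ to solve~$Q(\dot{u}_i,\dot{v}_i)=-(\mu/\hat{\omega}_i^2-1,\tfrac{\tilde{\gamma}}{2}(\mu/\hat{\omega}_i^2-1))$, and then runs the $s$-perturbed functional~$\m{M}'_s=\m{M}'+s\m{F}_\mu$ to force the distance between the perturbed points to be~$O(s)$. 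Your approach instead aims at the \emph{rigidity} statement: connect directly by a weak geodesic, show~$\m{M}'$ is affine, and deduce that the geodesic is trivial. Both strategies appear in~\cite{BermanBerndtsson_uniquness}; the perturbation argument is the one the paper adapts because it bypasses the delicate equality analysis along weak geodesics.

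There is, however, a concrete gap in your argument as written. You claim that the coupling term~$\int v\,\omega_u^2$ (together with~$\m{E}^{\tilde{\beta}}(v)$) is affine along pairs of weak geodesics, so that subtracting all the non-entropy pieces from~$\m{M}'$ leaves~$\m{H}^\alpha$ affine. This is false: the second variation of~$\tilde{\gamma}\bigl(\int v\,\omega_u^2-\tfrac{1}{3}\m{E}^{\tilde{\beta}}(v)+\m{E}^\alpha_{\chi_0}(u)\bigr)$ along smooth geodesics is exactly the quadratic term~$\tilde{\gamma}\int\sum_a\lambda_a^{-1}\abs{(\del\dot{v})_a-\lambda_a(\del\dot{u})_a}^2\,\omega_u^2$ computed in the paper, which is non-negative but certainly not identically zero. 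So the subtraction step does not isolate~$\m{H}^\alpha$ as stated. The fix is to argue instead that~$\m{M}'$ splits as a sum of two \emph{separately} convex pieces along weak geodesics (the twisted~$\alpha$-K-energy and the~$\tilde{\gamma}$-coupling functional), so affineness of the sum forces each to be affine; you would then need to check that the coupling piece is genuinely convex along~$\m{C}^{1,1}$ geodesics, which is plausible since it involves only first derivatives and bounded curvatures but is not proved in the paper. Once the~$\alpha$-K-energy is affine and Berman--Berndtsson's rigidity produces~$\phi_t\in\mrm{Aut}_{\mrm{red}}(X)$ with~$\phi_t^*\omega_1=\omega_{u_t}$, note also that your step~3 is unnecessarily hard: since~$\chi_0$ and~$\phi_1^*\chi_1$ both solve~$\chi^2=\omega_0^2$ in~$\tilde{\beta}$, Yau's uniqueness gives~$\phi_1^*\chi_1=\chi_0$ directly, without extracting the pointwise coupling identity along the weak geodesic.
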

We can follow the argument in~\cite{BermanBerndtsson_uniquness} almost directly, the main difference is that, while the~$K$-energy is defined on the space of K\"ahler forms in~$\alpha$, our functional~$\m{M}'$ is defined on pairs~$(\omega,\chi)$ of K\"ahler forms in~$\alpha\times\tilde{\beta}$, so we will need to make some minor adjustments to the original proof of Berman and Berndtsson.

The main ingredients of the proof are the convexity properties of~$\m{M}'$ along weak geodesics, and the positivity of its Hessian. By this we mean the Hessian of~$\m{M}'$ with respect to the Donaldson-Mabuchi connection on~$\alpha\times\tilde{\beta}$, which a straightforward computation shows to be
\begin{equation}\label{eq:Hessian_M'-symmetric}
\begin{split}
\mrm{Hess}\,&\m{M}_{(\omega,\chi)}\left((u_0,v_0),(u_1,v_1)\right)=\int\Re\left(g_\omega(\m{D} u_0,\overline{\m{D}}u_1)\right)\omega^2+\\
&+\frac{\tilde{\gamma}}{2}\int\left[g_\chi(\mrm{d}v_0,\mrm{d}v_1)+g_\chi(\mrm{grad}_\omega u_0,\mrm{grad}_\omega u_1)-g_\omega(\mrm{d}v_0,\mrm{d}u_1)-g_\omega(\mrm{d}u_0,\mrm{d}v_1)\right]\omega^2.
\end{split}
\end{equation} 
Alternatively, integrating by parts we can express the Hessian by
\begin{equation}\label{eq:Hessian_M'-operator}
\begin{split}
\mrm{Hess}\,&\m{M}'_{(\omega,\chi)}\left((u_0,v_0),(u_1,v_1)\right)=\int u_1\,\Re\left(\m{D}^*\m{D} u_0\right)\omega^2+\\
+&\frac{\tilde{\gamma}}{2}\int u_1\,\mrm{div}_\omega\left(T^\omega_\chi(\mrm{grad}_\chi v_0-\mrm{grad}_\omega u_0)\right)\omega^2+\frac{\tilde{\gamma}}{2}\int v_1\mrm{div}_\omega\left(\mrm{grad}_\chi v_0-\mrm{grad}_\omega u_0\right)\omega^2,
\end{split}
\end{equation}
where~$T^\omega_\chi\coloneqq\omega^{-1}\chi$; more explicitly,~$T^\omega_\chi$ is the endomorphism of~$TX$ defined by composing the isomorphisms defined by~$\omega$ and~$\chi$,~$TX\xrightarrow{\chi}T^*X\xrightarrow{\omega}TX$.

We let~$Q:\m{C}^\infty(X)\times\m{C}^\infty(X)\to \m{C}^\infty(X)\times\m{C}^\infty(X)$ be the operator
\begin{equation}
Q(u,v)=\left(
\Re\left(\m{D}^*\m{D} u\right)+\frac{\tilde{\gamma}}{2}\mrm{div}_\omega\left(T^\omega_\chi(\mrm{grad}_\chi v_0-\mrm{grad}_\omega u_0)\right),
\frac{\tilde{\gamma}}{2}\mrm{div}_\omega\left(\mrm{grad}_\chi v-\mrm{grad}_\omega u\right)\right),
\end{equation}
which is elliptic, and self-adjoint with respect to the~$L^2(\omega)$ pairing on~$\m{C}^\infty(X)\times\m{C}^\infty(X)$. Equation~\eqref{eq:Hessian_M'-operator} is thus equivalent to
\begin{equation}
\mrm{Hess}\,\m{M}'_{(\omega,\chi)}\left((u_0,v_0),(u_1,v_1)\right)=\left\langle Q(u_0,v_0),(u_1,v_1) \right\rangle_{L^2(\omega)}.
\end{equation}
Proceeding as in the proof of Proposition~\ref{prop:convex_geod}, it is easy to check that~$\mrm{Hess}\,\m{M}'_{(\omega,\chi)}$ is semipositive-definite, and the isotropic directions are given by the tangent vectors~$(u,v)$ that lie in the kernel of~$Q$, i.e.~$\mrm{grad}_\omega u$ is real holomorphic and~$\mrm{grad}_\omega u=\mrm{grad}_\chi v$.
\begin{proof}[Proof of Theorem~\ref{thm:surface_uniqueness}]
Assume that~$(\omega_0,\chi_0)$ and~$(\omega_1,\chi_1)$ are solutions of~\eqref{eq:scalar_dHYM_surface}. Using~$\omega_0$ and~$\chi_0$ as reference points in~$\alpha$ and~$\tilde{\beta}$, we identify the sets of K\"ahler forms in~$\alpha$ and~$\tilde{\beta}$ with subsets of~$\m{C}^\infty(X)$, through the~$\del\delbar$-Lemma.

Let~$\mu$ be a volume form on~$X$ such that~$\int_X\mu=\alpha^2=\tilde{\beta}^2$, and define a function~$\m{F}$ on~$\alpha\times\tilde{\beta}$ as
\begin{equation}
\m{F}_\mu(u,v)=\int u\,\mu-\frac{1}{3}\m{E}^{\alpha}(u)+\frac{\tilde{\gamma}}{2}\left(\int v\,\mu-\frac{1}{3}\m{E}^{\tilde{\beta}}(v)\right).
\end{equation}
Notice that the differential of~$\m{F}_\mu$ does not depend on the choice of a reference point in~$\alpha\times\tilde{\beta}$, and it can be written as
\begin{equation}
D\m{F}_\mu\Bigr|_{(u,v)}(\dot{u},\dot{v})=\int\dot{u}(\mu-\omega_u^2)+\frac{\tilde{\gamma}}{2}\int\dot{v}(\mu-\chi_v^2).
\end{equation}
In particular, as~$\chi_i^2=\omega_i^2$,
\begin{equation}
D\m{F}_\mu\Bigr|_{(\omega_i,\chi_i)}(\dot{u},\dot{v})=\int\left(\dot{u}+\frac{\tilde{\gamma}}{2}\dot{v}\right)(\mu-\omega_i^2).
\end{equation}

Recall from Remark~\ref{rmk:kernel} that the kernel of the linearisation of~\eqref{eq:scalar_dHYM_surface} (that is, the kernel of~$Q$) around a solution is in bijective correspondence with~$\f{h}_0$, the set of holomorphic vector fields on~$X$ that have a holomorphy potential. Let~$\mrm{Aut}_0(X)\coloneqq\mrm{exp}(\f{h}_0)\subseteq\mrm{Aut}(X)$ be the corresponding subgroup of biholomorphisms of the surface, and for~$i=0,1$ denote by~$S_i\subset\alpha\times\tilde{\beta}$ the set of all pairs of K\"ahler metrics obtained by pulling-back~$(\omega_i,\chi_i)$ by an element of~$\mrm{Aut}_0(X)$. By definition, any two points of~$S_i$ can be joined by a (trivial, smooth) geodesic, and as~$\m{F}_\mu$ is strictly convex along geodesics,~\cite[Proposition~$4.7$]{BermanBerndtsson_uniquness} shows that there exists a minimum~$(\hat{\omega}_i,\hat{\chi}_i)\in S_i$. In particular,
\begin{equation}\label{eq:Fmu_nucleodifferenziale}
D\m{F}_\mu\Bigr|_{(\hat{\omega}_i,\hat{\chi}_i)}(\dot{u},\dot{v})=0\mbox{ for every }(\dot{u},\dot{v})\in\mrm{ker}\,Q.
\end{equation}
We will show that~$(\hat{\omega}_0,\hat{\chi}_0)=(\hat{\omega}_1,\hat{\chi}_1)$. The key point is that~\eqref{eq:Fmu_nucleodifferenziale} implies the existence of tangent vectors~$(\dot{u}_i,\dot{v}_i)\in T_{(\hat{\omega}_i,\hat{\chi}_i)}\alpha\times\tilde{\beta}$ for~$i=0,1$ such that 
\begin{equation}\label{eq:differenziale_nullo}
\left(D_{(\dot{u}_i,\dot{v}_i)}(D\m{M}')\right)\Bigr|_{(\hat{\omega}_i,\hat{\chi}_i)}+D\m{F}_\mu\Bigr|_{(\hat{\omega}_i,\hat{\chi}_i)}=0.
\end{equation}
To show this, say for~$i=0$, notice that~\eqref{eq:differenziale_nullo} is equivalent to
\begin{equation}\label{eq:Hessian_vanishingQ}
Q(\dot{u}_0,\dot{v}_0)=-\left(\frac{\mu-\hat{\omega}_0^2}{\hat{\omega}_0^2},\frac{\tilde{\gamma}}{2}\frac{\mu-\hat{\omega}_0^2}{\hat{\omega}_0^2}\right).
\end{equation}
By the Fredholm alternative,~\eqref{eq:Hessian_vanishingQ} has a solution if and only if~$\left(\frac{\mu-\hat{\omega}_0^2}{\hat{\omega}_0^2},\frac{\tilde{\gamma}}{2}\frac{\mu-\hat{\omega}_0^2}{\hat{\omega}_0^2}\right)$ lies in the~$L^2(\hat{\omega}_0)$-orthogonal of the kernel of~$Q$, and this property is exactly~\eqref{eq:Fmu_nucleodifferenziale}.

Having established~\eqref{eq:differenziale_nullo}, we proceed as in the proof of~\cite[Theorem~$4.4$]{BermanBerndtsson_uniquness}; define a modified version of~$\m{M}'$ as
\begin{equation}
\m{M}_s'(u,v)=\m{M}'(u,v)+s\,\m{F}_\mu(u,v)
\end{equation}
and consider, for very small~$s$ and for~$i=0,1$, the differential of~$\m{M}'_s$ at the two points~$(\omega_i(s),\chi_i(s))\coloneqq(\hat{\omega}_i+\I\del\delbar\,s\dot{u}_i,\hat{\chi}_i+\I\del\delbar\,s\dot{v}_i)$. As~$(\omega_i(0),\chi_i(0))=(\hat{\omega}_i,\hat{\chi}_i)$ are critical points of~$\m{M}'$, this differential satisfies (c.f.~\eqref{eq:differenziale_nullo})
\begin{equation}
\partial_{s=0}\left(D\m{M}'_s\Bigr|_{(\omega_i(s),\chi_i(s))}\right)=\left(D_{(\dot{u}_i,\dot{v}_i)}D\m{M}'\right)\Bigr|_{(\hat{\omega}_i,\hat{\chi}_i)}+D\m{F}\Bigr|_{(\hat{\omega}_i,\hat{\chi}_i)}=0,
\end{equation}
so that~$(D\m{M}_s')_{(\omega_i(s),\chi_i(s))}=O(s^2)$. At this point, the argument goes exactly as in the proof of~\cite[Theorem~$4.4$]{BermanBerndtsson_uniquness}, and we refer to that paper for the details. Briefly, what we showed up to this point is that~$(\omega_i(s),\chi_i(s))$ is almost a critical point for~$\m{M}'_s$. Connect~$(\omega_0(s),\chi_0(s))$ and~$(\omega_1(s),\chi_1(s))$ by a weak geodesic segment~$(u_s(t),v_s(t))$;~$\m{F}_\mu$ is strictly convex along weak geodesics, and quantifiably so (\cite[Proposition~$4.1$]{BermanBerndtsson_uniquness}). Hence~$\m{M}'_s$ is also strictly convex along~$(u_s(t),v_s(t))$, which gives an upper bound of~$O(s)$ for the distance between~$(\omega_0(s),\chi_0(s))$ and~$(\omega_1(s),\chi_1(s))$. Taking the limit for~$s\to 0$ we deduce~$(\hat{\omega}_0,\hat{\chi}_0)=(\hat{\omega}_1,\hat{\chi}_1)$, so~$S_0=S_1$.
\end{proof}

\addcontentsline{toc}{section}{References}
\printbibliography

\end{document}